\newcommand\bigcheck[1]{#1 \raise1ex\hbox{$\hspace{-1ex}{}^\vee$}}
\newcommand\sucheck[1]{#1 \raise0.5ex\hbox{$\hspace{-1ex}{}^\vee$}}
\newtheorem{theorem}{Theorem}[section]
\newtheorem{lemma}[theorem]{Lemma}
\newtheorem{corollary}[theorem]{Corollary}
\newtheorem{proposition}[theorem]{Proposition}
\newtheorem*{lemma*}{Lemma}
\theoremstyle{definition}
\newtheorem{definition}[theorem]{Definition}
\theoremstyle{remark}
\newtheorem{remark}[theorem]{Remark}
\newtheorem{example}[theorem]{Example}
\newcommand{\mc}[1]{{\mathcal #1}}
\newcommand{\mb}[1]{{\mathbb #1}}
\newcommand{\id}{{1 \mskip -5mu {\rm I}}}
\renewcommand{\tilde}{\widetilde}
\renewcommand{\ker}{\mathop{\rm Ker }}
\definecolor{light}{gray}{.9}
\begin{document}

%%%%%%%%%%%%%%%%%%%%%%%%%%%%%%%%%%%%%%%%%%%%%%%%%%%%%%%%%%%%%%%%%%%%%%%%%%%%%%%%%%%%%%%%%%%%%%%%%%%%%%%%%%%%%%
%%%%%%%%%%%%%%% TITLE %%%%%%%%%%%%%%%%%%%%%%%%%%%%%%%%%%%%%%%%%%%%%%%%%%%%%%%%%%%%%%%%%%%%%%%%%%%%%%%%%%%%%%%
%%%%%%%%%%%%%%%%%%%%%%%%%%%%%%%%%%%%%%%%%%%%%%%%%%%%%%%%%%%%%%%%%%%%%%%%%%%%%%%%%%%%%%%%%%%%%%%%%%%%%%%%%%%%%%

\title{Rational matrix pseudodifferential operators}

\author{
Sylvain Carpentier
\thanks{Ecole Normale Superieure, 
75005 Paris, France, and M.I.T~~
sylvain.carpentier@ens.fr ~~~~},~~
Alberto De Sole
\thanks{Dipartimento di Matematica, Universit\`a di Roma ``La Sapienza'',
00185 Roma, Italy ~~
desole@mat.uniroma1.it ~~~~
Supported in part by Department of Mathematics, M.I.T.},~~
Victor G. Kac
\thanks{Department of Mathematics, M.I.T.,
Cambridge, MA 02139, USA.~~
kac@math.mit.edu~~~~
Supported in part by an NSF grant~~ 
}~~
%and Minoru Wakimoto\thanks{~~wakimoto@r6.dion.ne.jp~~~~Supported in part by 
%Department of Mathematics, M.I.T.}
}

\maketitle

%%%%%%%%%%%%%%%%%%%%%%%%%%%%%%%%%%%%%%%%%%%%%%%%%%%%%%%%%%%%%%%%%%%%%%%%%%%%%%%%%%%%%%%%%%%%%%%%%%%%%%%%%%%%%%
%%%%%%%%%%%%%%% Abstract %%%%%%%%%%%%%%%%%%%%%%%%%%%%%%%%%%%%%%%%%%%%%%%%%%%%%%%%%%%%%%%%%%%%%%%%%%%%%%%%%%%%%%%
%%%%%%%%%%%%%%%%%%%%%%%%%%%%%%%%%%%%%%%%%%%%%%%%%%%%%%%%%%%%%%%%%%%%%%%%%%%%%%%%%%%%%%%%%%%%%%%%%%%%%%%%%%%%%%

\begin{abstract}
\noindent 
The skewfield $\mc K(\partial)$ of rational pseudodifferential operators 
over a 
differential field $\mc K$ is the skewfield of fractions of the algebra
of differential operators $\mc K[\partial]$. 
In our previous paper we showed that any 
$H\in \mc K(\partial)$ has a minimal fractional decomposition $H=AB^{-1}$,
where $A,B\in \mc K[\partial]$, $B\neq 0$, and any common right
divisor of $A$ and $B$ is a non-zero element of $\mc K$. 
Moreover, any right fractional decomposition of $H$ is obtained by multiplying
%this decomposition is unique up to multiplication of 
$A$ and $B$ on the right by the same non-zero element of $\mc K[\partial]$. 
In the present paper we study
the ring $M_n(\mc K(\partial))$ of $n\times n$ matrices over the skewfield
$\mc K(\partial)$. We show that similarly, any  
$H\in M_n(\mc K(\partial))$ has a minimal fractional decomposition $H=AB^{-1}$,
where $A,B\in M_n(\mc K[\partial])$, $B$ is non-degenerate, and 
any common right divisor of $A$ and $B$ is an invertible element of the ring 
$M_n(\mc K[\partial])$. Moreover, 
any right fractional decomposition of $H$ is obtained by multiplying
%this decomposition is unique up to 
$A$ and $B$ on the right by the same non-degenerate element of 
$M_n(\mc K [\partial])$. We give several equivalent definitions of the 
minimal fractional
decomposition. These results are applied to the study of maximal isotropicity
property, used in the theory of Dirac structures.  
\end{abstract}

%%%%%%%%%%%%%%%%%%%%%%%%%%%%%%%%%%%%
\section{Introduction}
\label{sec:intro}
Let $\mathcal{K}$ be a differential field with derivation $\partial$ and let 
${\mathcal{K}}[\partial]$ be the algebra of differential operators over 
$\mathcal{K}$. The skewfield ${\mathcal{K}}(\partial)$ of rational 
pseudodifferential operators is, by definition,
the subskewfield of the skewfield of pseudodifferential operators 
${\mathcal{K}}(({\partial}^{-1}))$, generated by the subalgebra 
${\mathcal{K}}[\partial]$. In our paper [CDSK12]
we showed that any rational pseudodifferential operator $H$ has a unique right minimal
decomposition $H=AB^{-1}$, where $A,B \in {\mathcal{K}}[\partial]$, $B$ is a non-zero monic
differential operator, and any other right fractional decomposition of $H$ can be obtained 
by multiplying on the right both $A$ and $B$ by a non-zero differential 
operator $D$.

In the present paper we establish a similar result for the ring 
$M_n({\mathcal{K}}(\partial))$ of $n \times n$ matrix rational pseudodifferential operators. Namely we show that 
any $H \in M_n({\mathcal{K}}(\partial))$ has a 
\emph{right minimal fractional decomposition} $H=AB^{-1}$, where 
$B \in M_n({\mathcal{K}}[\partial])$ is non-degenerate (i.e. has a non-zero 
Dieudonn\'e 
determinant $\det (B)$), satisfying one of the following equivalent 
properties :
\begin{enumerate}[$(i)$]
\item
$d(B)$ is minimal among all possible right fractional decompositions 
$H=AB^{-1}$, where $d(B)$ is the order of $\det(B)$ ;
\item
$A$ and $B$ are \emph{coprime}, i.e.
if $A=A_1D$ and $B=B_1D$, with 
$A_1,B_1,D\in M_n(\mc K[\partial])$,
then $D$ is invertible in $M_n(\mc K[\partial])$;
%(i.e. $\deg_\xi\det D=0$);
\item
$\ker A\cap\ker B=0$ in any differential field extension of $\mathcal{K}$.
\end{enumerate}
By $(i)$, a right minimal fractional decomposition exists for any $n \times n$ 
matrix 
rational pseudodifferential operator $H$. We prove its uniqueness, namely 
that all right minimal fractional decompositions can be obtained from each 
other by multiplication on the right of the numerator and the denominator 
by an invertible $n \times n$ matrix differential operator $D$.
% ( i.e. $det D$ is non-zero). Moreover
Moreover, any right fractional decomposition of $H$ can be obtained by 
multiplying on the right the numerator and the denominator of a minimal right 
fractional decomposition by the same non-degenerate matrix differential 
operator. 
%(Of course, the same statements hold if we replace ``right'' by ``left''.) 

We derive from these results the following maximal isotropicity property of 
the 
minimal fractional decomposition $H=AB^{-1}$, which is important for the 
theory of Dirac structures [D93], [BDSK09], [DSK12]. 
%Let $\mathcal{V}$ be a differential algebra. 
Introduce the following bilinear form on the space 
${{\mathcal{K}}^n}\oplus {{\mathcal{K}}^n}$ with values in 
${\mathcal{K}}/{\partial{\mathcal{K}}}$ :
$$ ( P_1 \oplus Q_1 | P_2 \oplus Q_2)= \int ({P_1}.{Q_2} + {P_2}.{Q_1}),$$
where $\int$ stands for the canonical map 
$\mathcal{K} \rightarrow {\mathcal{K}}/{\partial{\mathcal{K}}}$ and 
$P.Q$ is the standard dot product. Let $A$ and $B$ be two $n\times n$
matrix differential operators. Define 
$$ {\mathcal{L}}_{A,B}=\{ B(\partial)P \oplus A(\partial)P | 
P \in {\mathcal{K}}^n \}.$$
It is easy to see that, assuming that $\det (B)\neq 0$, the subspace 
${\mathcal{L}}_{A,B}$ of
${{\mathcal{K}}^n}\oplus {{\mathcal{K}}^n}$ is isotropic if and only if the 
matrix rational pseudodifferential operator $H=AB^{-1}$ is skewadjoint. 
We prove that  ${\mathcal{L}}_{A,B}$ 
is maximal isotropic if $AB^{-1}$ is a right minimal fractional 
decomposition of $H$. Note that ${\mathcal{L}}_{A,B}$ is independent of the 
choice of the minimal fractional decomposition due to its uniqueness, 
mentioned above.

We wish to thank Pavel Etingof and Andrea Maffei for useful discussions,
and Mike Artin, Michael Singer and Toby Stafford for useful correspondence.

%%%%%%%%%%%%%%%%%%%%%%%%%%%%%%%%%%%%
\section{Some preliminaries on rational pseudodifferential operators}

Let $\mathcal{K}$ be a differential field of characteristic $0$, with a 
derivation $\partial$, and let $\mathcal{C}=\ker\partial$ be the subfield of constants.
Consider the algebra ${\mathcal{K}}[\partial]$ (over $\mathcal{C}$) of 
differential operators. It is a subalgebra of the skewfield 
${\mathcal{K}}(({\partial}^{-1}))$ of pseudodifferential operators. 
The subskewfield ${\mathcal{K}}(\partial)$ of 
${\mathcal{K}}(({\partial}^{-1}))$, generated by ${\mathcal{K}}[\partial]$, 
is called the skewfield of \emph{rational pseudodifferential operators} 
(see [CDSK12] for details). We have obvious inclusions :
$${\mathcal{K}} \subset {\mathcal{K}}[\partial] \subset {\mathcal{K}}(\partial) \subset{\mathcal{K}}(({\partial}^{-1})). $$
If the derivation acts trivially on $\mathcal{K}$, so that ${\mathcal{C}}={\mathcal{K}}$, letting ${\partial}=\lambda$, an indeterminate, commuting with 
elements of $\mathcal{K}$, we obtain inclusions of commutative algebras
$${\mathcal{C}} \subset {\mathcal{C}}[\lambda] \subset {\mathcal{C}}(\lambda) \subset{\mathcal{C}}(({\lambda}^{-1})).$$
It is well known that in many respects the non-commutative algebras
${\mathcal{K}}[\partial]$ and ${\mathcal{K}}(\partial)$ "behave" in a very 
similar way to that of ${\mathcal{C}}[\lambda]$ and ${\mathcal{C}}(\lambda)$. 
Namely, the ring ${\mathcal{K}}[\partial]$ 
is right (resp. left) Euclidean, hence any right (resp. left) ideal is 
principal. Moreover, any two right ideals $A{\mathcal{K}}[\partial]$ and 
$B{\mathcal{K}}[\partial]$ have non-zero intersection 
$M{\mathcal{K}}[\partial]$, where $M \neq 0$ is called the least right common 
multiple of $A$ and $B$; also $A{\mathcal{K}}[\partial] +B{\mathcal{K}}[\partial]=D{\mathcal{K}}[\partial]$, where $D$ is the greatest right common divisor 
of $A$ and $B$.
Furthermore, any element $H$ of ${\mathcal{K}}(\partial)$ has a right 
fractional decomposition $H=AB^{-1}$, where $B \neq 0$. A right fractional 
decomposition for which the differential operator $B$ has minimal order is 
called the minimal fractional decomposition (equivalently, the greatest 
common divisor of $A$ and $B$ is 1). It is unique up to multiplication of $A$ 
and $B$ on the right by the same non-zero element of ${\mathcal{K}}$. 
Any other fractional decomposition of $H$ is obtained from the minimal one by 
multiplication of $A$ and $B$ on the right by a non-zero element of 
${\mathcal{K}}[\partial]$. See [CDSK12] for details. 
Of course all these facts still hold if we replace "right" by "left".

%%%%%%%%%%%%%%%%%%%%%%%%%%%%%%%%%%%%
\section{The Dieudonn\'e determinant}

The Dieudonn\'e determinant of an $n \times n$ matrix pseudodifferential 
operator $A \in M_n({\mathcal{K}}(({\partial}^{-1})))$ has the 
form $\det(A)={\det_1(A)}{{\lambda}^{d(A)}}$ where $\det_1(A) \in \mathcal{K}$, $\lambda$ is an indeterminate, and $d(A) \in \mathbb{Z}$. It exists and is 
uniquely defined by the following properties (see [Die43], [Art57]) :
\begin{enumerate}[$(i)$]
\item
$\det(AB)=\det(A)\det(B)$;
\item
If $A$ is upper triangular with non-zero diagonal entries 
$A_{ii} \in {\mathcal{K}}(({\partial}^{-1}))$ of degree (or order) $d(A_{ii})$ and 
leading coefficient $a_i \in \mathcal{K}$, then 
$$\det\nolimits_{1}(A)= \prod_{i=1}^{n} a_i, \:  d(A)= \sum_{i=1}^{n} d(A_{ii}).$$
By definition, $\det(A)=0$ if one of the $A_{ii}$ is $0$.
\end{enumerate}
Note that $\det_1(AB)=\det_1(A)\det_1(B)$.  
A matrix $A$ whose Dieudonn\'e determinant is non-zero is called 
\emph{non-degenerate}. In this case the integer $d(A)$ is well defined. It is 
called the degree of $det(A)$ and of $A$. Note that $d(AB)=d(A)+d(B)$ if both 
$A$ and $B$ are non-degenerate. 

\begin{lemma} 
\label{lem:3.1}
(a)Any $A \in M_n({\mathcal{K}}[\partial])$  
can be written in the form $A=UT$ (resp. $TU$), 
where $U$ 
is an invertible element of $M_n({\mathcal{K}}[\partial])$ 
and $T\in M_n({\mathcal{K}}[\partial])$ is upper triangular.

(b) Any non-degenerate $A \in 
M_n({\mathcal{K}}[\partial])$ 
can be written in the form $A=U_1DU_2$, where 
$ U_1, \, U_2$ are invertible elements of $M_n({\mathcal{K}}[\partial])$  
and $D$ is a diagonal $n\times n$ matrix with non-zero entries from 
${\mathcal{K}}[\partial]$. 
\end{lemma}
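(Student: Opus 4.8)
The plan is to reduce everything to a column-reduction (Hermite-type) normal form, exploiting the fact that $\mc K[\partial]$ is right Euclidean, so that the usual Gaussian elimination over a division ring works with $\partial$-degree playing the role of the Euclidean norm. For part (a), I would argue by induction on $n$. Look at the first column of $A$, say with entries $a_{1},\dots,a_{n}\in\mc K[\partial]$. If all of them are zero, the first column is already in triangular shape and I recurse on the lower-right $(n-1)\times(n-1)$ block. Otherwise pick a nonzero entry of minimal order; using left multiplication by an elementary permutation matrix bring it into the $(1,1)$ slot, then for each $i>1$ use right-division $a_i = q_i a_1 + r_i$ with $\ord(r_i)<\ord(a_1)$ and subtract $q_i$ times row $1$ from row $i$ — this is left multiplication by an invertible (elementary) matrix over $\mc K[\partial]$. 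Iterating, the order of the $(1,1)$ entry strictly decreases at each pass in which some $a_i\neq0$, so after finitely many steps all subdiagonal entries of column $1$ vanish. Then recurse on the complementary block, assembling the invertible factors into a single $U$. The ``resp. $TU$'' version is obtained by transposing the roles of rows and columns, i.e.\ doing column operations (right multiplication by elementaries) instead, or equivalently by applying part (a) to a suitable matrix and conjugating; I would just remark that the left-Euclidean property of $\mc K[\partial]$ makes the mirror argument go through verbatim.

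For part (b), start from $A=UT$ as in (a), with $U$ invertible and $T$ upper triangular. Since $A$ is non-degenerate and $\det$ is multiplicative with $\det(U)\in\mc K^\times\lambda^{0}$ (an invertible matrix differential operator has a two-sided inverse in $M_n(\mc K[\partial])$, forcing $d(U)=0$ and $\det_1(U)\neq0$), the matrix $T$ is non-degenerate, so by property $(ii)$ of the Dieudonné determinant none of the diagonal entries $T_{ii}$ is zero. Now clear the above-diagonal entries of $T$ by column operations: for $j>i$ use $T_{ij}=T_{ii}q_{ij}+r_{ij}$... — wait, here I need to be careful about the side. Since I want to kill $T_{ij}$ using the pivot $T_{jj}$ sitting below it in column $j$, I should instead process columns from right to left, subtracting a left multiple of column $j$ from column $i$ for $i<j$; writing $T_{ij}=T_{jj}q$ is generally impossible (wrong divisibility), so instead I eliminate using the diagonal pivot in the \emph{same column}, working on entries $T_{ij}$ with $i<j$ via right-division by $T_{jj}$ from the appropriate side — concretely, right multiplication of $T$ by an upper-triangular invertible matrix (unitriangular, entries in $\mc K[\partial]$) can replace $T_{ij}$ by its remainder modulo $T_{jj}$ acting from the right, and after finitely many passes all off-diagonal entries above the diagonal become zero. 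This yields $T=T'U_2^{-1}$ with $T'$ diagonal and $U_2$ invertible unitriangular, hence $A=U T' U_2^{-1}=U_1 D U_2'$ after renaming, with $D=T'$ diagonal with nonzero entries.

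The main obstacle — and the one place the non-commutativity really bites — is exactly this last elimination step in (b): over a commutative ring one freely chooses which variable to solve for, but here $T_{ij}\in T_{jj}\mc K[\partial]$ and $T_{ij}\in\mc K[\partial]\,T_{jj}$ are different conditions, so one must fix a consistent convention (I will eliminate within each column using the diagonal entry of that column, reducing $T_{ij}$ modulo \emph{left} multiples of $T_{jj}$, which corresponds to right multiplication of the whole matrix by unitriangular invertibles) and check that the passes terminate — they do, because at each pass the tuple of orders $(\ord T_{ij})_{i<j}$ decreases in a suitable well-founded order (e.g.\ process the columns $j=n,n-1,\dots,2$ in turn; once column $j$ is cleared, later operations on columns $<j$ do not touch it). I would also note the small lemma, used twice, that $U\in M_n(\mc K[\partial])$ is invertible in $M_n(\mc K[\partial])$ iff $\det U\in\mc K^\times$ (equivalently $d(U)=0$ and $\det_1(U)\neq0$), which follows from multiplicativity of the Dieudonné determinant and the fact that $d$ of anything in $M_n(\mc K[\partial])$ is $\geq 0$.
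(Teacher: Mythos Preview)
Your argument for part (a) is correct and essentially identical to the paper's: Gaussian elimination via elementary row operations, using the right-Euclidean property of $\mc K[\partial]$ to drive down the order of the pivot until the subdiagonal entries of the working column vanish, then recursing on the lower-right block.

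Part (b), however, has a genuine gap. You propose to start from the upper-triangular $T$ obtained in (a) and clear its above-diagonal entries by column operations alone (right multiplication by upper-unitriangular invertibles), but this is in general impossible. Right multiplication by $I+E_{k\ell}q$ with $k<\ell$ replaces $T_{i\ell}$ by $T_{i\ell}+T_{ik}q$; so to modify $T_{ij}$ (with $i<j$) you can only add right $\mc K[\partial]$-multiples of $T_{ik}$ for $k<j$ --- never anything involving the pivot $T_{jj}$ sitting in the same column, contrary to what you assert. At best you reduce $T_{ij}$ to a remainder of order $<\ord(T_{ii})$, which need not be zero, and no further column operation preserving triangularity can shrink it. Concretely, take
\[
T=\begin{pmatrix}\partial & 1\\ 0 & \partial\end{pmatrix}.
\]
If $TU_2$ is diagonal for some $U_2=\left(\begin{smallmatrix}a&b\\c&d\end{smallmatrix}\right)\in M_2(\mc K[\partial])$, the off-diagonal conditions $\partial c=0$ and $\partial b+d=0$ force $c=0$ and $d=-\partial b$; then either $b=0$ (making $U_2$ degenerate) or the $(2,2)$-entry $-\partial b$ has order $\geq 1$ and hence is not a unit in $\mc K[\partial]$, so $U_2$ is never invertible in $M_2(\mc K[\partial])$. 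The paper's proof avoids this obstruction by running the Smith-normal-form argument directly on $A$: among all matrices obtainable from $A$ by elementary row \emph{and} column operations, choose one whose $(1,1)$-entry is nonzero of minimal order, use it to clear both the first row and the first column, and recurse. The simultaneous use of both kinds of operations is what makes the diagonalization go through.
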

\begin{proof}
Recall that an elementary row (resp. column) operation of a matrix from 
$M_n({\mathcal{K}}[\partial])$ is either a permutation of 
two of its rows (resp. column), or adding to one row (resp. column) another 
one, multiplied on the left (resp. right) by an element of 
$\mathcal{K}[\partial]$. 
Since the row (resp. column) operations are equivalent to multiplication on 
the left (reps. right) by the corresponding elementary matrix, the first 
operation only changes the sign of the determinant and the second does not 
change it. 

In the proof of (a) we may assume that $A\neq 0$, and let $j$ be the minimimal 
index, for which the $j$-th column is non-zero. 
Among all matrices that can be obtained from $A$ by elementary row operations 
choose the one for which the $(1,j)$-entry is non-zero and has the minimal order. 
Then, by elementary row 
operations, using the Euclidean property of ${\mathcal{K}}[\partial]$, we 
obtain from $A$ a matrix $A_1$ such that all entries of the $j$-th column, 
except the first one, are zero. Repeating this process 
for the $(n-1) \times (n-1)$ submatrix obtained from $A_1$ by deleting the 
first row and column, we obtain the decomposition $A=UT$ as in (a).

For the decomposition $A=TU$, we use a similar argument, except that
we start from largest $j$ for which the $j$-th row is non-zero,
we perform column operations to have the $(j,n)$-entry non-zero and of minimal possible order,
and then we further make elementary column operations
to obtain a matrix $A_1$ such that all entries of the $j$-th row are zero, 
except the last one.
The claim follows by induction, after deleting the last row and column.

In order to obtain the decomposition in (b), we use the same argument, except 
that we choose among all matrices obtained from $A$ by elementary row and 
column operations the one for which the $(1,1)$-entry is non-zero and has the 
minimal order (it exists since $\det (A) \neq 0$). 
\end{proof}
%\begin{remark}
%The first decomposition of $A$ in \ref{lem:3.1} exists also for degenerate $A$.
%\end{remark}
\begin{corollary}\label{cor:3.2} 
Let $A \in M_n({\mathcal{K}}[\partial])$ be 
a non-degenerate matrix differential operator. Then 
\begin{enumerate}[(a)]
\item
$d(A) \in {\mathbb{Z}}_{+}$.
\item
$A$ is an invertible element of the ring 
$M_n({\mathcal{K}}[\partial])$ if and only if $d(A)=0$.
\end{enumerate}
\end{corollary}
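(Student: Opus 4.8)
The plan is to deduce both statements from Lemma~\ref{lem:3.1}(a) by reducing to the upper triangular case. First I would write $A = UT$ with $U$ invertible in $M_n(\mathcal{K}[\partial])$ and $T \in M_n(\mathcal{K}[\partial])$ upper triangular. The key preliminary step is to show $d(U) = 0$: inspecting the proof of Lemma~\ref{lem:3.1}(a), the factor $U$ can be taken to be a product of elementary matrices, so by multiplicativity of the Dieudonn\'e determinant it suffices to check that each elementary matrix has degree $0$. A transposition matrix $P$ satisfies $P^2 = I$, hence $2 d(P) = 0$; a transvection $I + c E_{ij}$ with $c \in \mathcal{K}[\partial]$ and $i \ne j$ becomes, after conjugation by a suitable permutation matrix (which leaves the Dieudonn\'e determinant unchanged, since permutation matrices have degree $0$), upper triangular with $1$'s on the diagonal, so property $(ii)$ of the Dieudonn\'e determinant gives degree $0$ there. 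Thus $d(U) = 0$ and $\det_1(U) = \pm 1$.

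Granting this, part (a) is immediate: from $\det(A) = \det(U)\det(T)$ and non-degeneracy of $A$ we get that $T$ is non-degenerate, so each diagonal entry $T_{ii}$ is a non-zero element of $\mathcal{K}[\partial]$ and hence has $\ord(T_{ii}) \ge 0$; therefore $d(A) = d(T) = \sum_{i=1}^n \ord(T_{ii}) \in \mathbb{Z}_+$.

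For part (b), the forward direction: if $A$ is invertible, then $A^{-1} \in M_n(\mathcal{K}[\partial])$ and $\det(A)\det(A^{-1}) = \det(I) \ne 0$, so $A^{-1}$ is non-degenerate; by part (a), both $d(A) \ge 0$ and $d(A^{-1}) \ge 0$, and since they sum to $d(I) = 0$ we conclude $d(A) = 0$. For the converse, if $d(A) = 0$ then $d(T) = 0$, so $\sum_i \ord(T_{ii}) = 0$ with all summands $\ge 0$, forcing every $T_{ii}$ to have order $0$, i.e.\ $T_{ii} \in \mathcal{K}^\times$. Then I would factor $T = D(I + N)$ with $D = \mathrm{diag}(T_{11},\dots,T_{nn})$ invertible in $M_n(\mathcal{K}[\partial])$ and $N$ strictly upper triangular, hence $N^n = 0$, so $I + N$ is invertible with inverse $\sum_{k=0}^{n-1}(-N)^k$; therefore $T$, and with it $A = UT$, is invertible in $M_n(\mathcal{K}[\partial])$.

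The only delicate point — and where a careless argument would fail — is the potential circularity in part (b): one cannot apply part (a) to $A^{-1}$ (or to $T^{-1}$) until part (a) has been established for \emph{all} non-degenerate matrices. This is exactly why it is essential to handle the invertible factor $U$ concretely, as an explicit product of elementary matrices whose degrees are computed by hand, rather than to treat it as an abstract invertible matrix. Everything else is routine.
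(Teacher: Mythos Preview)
Your argument is correct and is exactly the approach the paper intends: the corollary is left without proof because the proof of Lemma~\ref{lem:3.1}(a) already notes that elementary row operations change the Dieudonn\'e determinant only by a sign, so $d(A)=d(T)=\sum_i\ord(T_{ii})\ge 0$, and the rest follows just as you wrote. Your explicit treatment of the potential circularity (computing $d(U)$ by hand from the elementary factorization rather than invoking part~(b)) is precisely the right observation.
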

\begin{remark}\label{rem:3.3}
Let $A \in M_n({\mathcal{K}}((\partial^{-1})))$ 
and let $A^*$ be the adjoint matrix pseudodifferential operator.
If $det(A)=0$, then $det(A^*)=0$.
If $det(A) \neq 0$, then $det(A^*)={(-1)}^{d(A)}det(A)$.
This follows from the obvious fact that $A$ can be brought by elementary row transformations
over the skewfield $\mc K((\partial^{-1}))$ to an upper triangular matrix,
and in this case the statement becomes clear.
\end{remark}
%
%\begin{proof}
%If $A\in M_n(\mc K[\partial])$ is upper triangular, the statement is obvious.
%From Lemma \ref{lem:3.1}(a) and its proof, we have that $A=TU$, 
%where $U\in M_n(\mc K[\partial]))$ 
%is a product of elementary matrices, and $T$ is upper triangular.
%Since elementary matrices have determinant $\pm1$, we have
%$\det (U)=\pm1=\det (U^*)$.
%Hence, $\det(A^*)=\det(U^*)\det(T^*)=(-1)^{d(T)}\det(U)\det(T)=(-1)^{d(A)}\det(A)$.
%\end{proof}

%%%%%%%%%%%%%%%%%%%%%%%%%%%%%%%%%%%%
\section{Rational matrix pseudodifferential operators}

A matrix $H \in M_n({\mathcal{K}}(\partial))$ is called a 
\emph{rational matrix pseudodifferential operator}. In other words, all the 
entries of such a matrix have the form $h_{ij}={a_{ij}}{b_{ij}}^{-1}$, 
$i,j=1,...,n$, where $a_{ij}, b_{ij} \in {\mathcal{K}}[\partial]$ and all 
${b_{ij}} \neq 0$. 
Let $b(\neq 0)$ be the least right common multiple of the 
$b_{ij}$'s, so that $b_{ij}.c_{ij}=b$ for some ${c_{ij}} \neq 0$.  
Multiplying $a_{ij}$ 
and $b_{ij}$ on the right by $c_{ij}$, we obtain $H={A_1}b^{-1}$, where 
$(A_1)_{ij}={a_{ij}}{c_{ij}}$. In other words $H$ has the right fractional 
decomposition $H={A_1}(b \id_n)^{-1}$.
However, among all right fractional decompositions $H=AB^{-1}$, where 
$A,B \in M_n({\mathcal{K}}[\partial])$ and $\det B \neq 0$, this might be 
not the "best" one.

\begin{definition} A right fractional decomposition $H=AB^{-1}$, where $A,B \in 
M_n({\mathcal{K}}[\partial])$ and $det B \neq 0$, is called 
\emph{minimal} if $d(B)$ ( $ \in {\mathbb{Z}}_{+}$) is minimal among all right 
fractional decompositions of $H$. 
\end{definition}
Note that, if $H=AB^{-1}$ is a minimal fractional decomposition,
then $0 \leq d(B) \leq d(b)$, where 
$b$ is the least right common multiple of all the entries of $H$.

\begin{proposition}\label{prop:4.2}
Let $A$ and $B$ be two non-degenerate $n \times n$ matrix differential 
operators. Then one can find non-degenerate $n \times n$ matrix differential 
operators $C$ and $D$, such that $AC=BD$ ( resp. $CA=DB$ ) 
\end{proposition}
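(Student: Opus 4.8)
The statement asserts a common "multiple" property for non-degenerate matrix differential operators, entirely analogous to the least common right multiple in $\mc K[\partial]$. The plan is to reduce to the scalar case by means of the structure theorem in Lemma~\ref{lem:3.1}(b). First I would handle the case where $A$ and $B$ are diagonal: if $A=\mathrm{diag}(a_1,\dots,a_n)$ and $B=\mathrm{diag}(b_1,\dots,b_n)$ with all $a_i,b_i\in\mc K[\partial]$ non-zero, then for each $i$ let $m_i$ be the least right common multiple of $a_i$ and $b_i$ in $\mc K[\partial]$, so $m_i=a_ic_i=b_id_i$ for suitable non-zero $c_i,d_i\in\mc K[\partial]$; setting $C=\mathrm{diag}(c_1,\dots,c_n)$ and $D=\mathrm{diag}(d_1,\dots,d_n)$ gives $AC=BD$ with $C,D$ obviously non-degenerate.

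Next I would bootstrap to the general case. By Lemma~\ref{lem:3.1}(b) write $A=U_1\Delta_A U_2$ and $B=V_1\Delta_B V_2$ with $U_1,U_2,V_1,V_2$ invertible in $M_n(\mc K[\partial])$ and $\Delta_A,\Delta_B$ diagonal with non-zero entries. A convenient reduction is to first replace $A$ by $U_2^{-1}A=\Delta_AU_2$ — wait, that is not quite diagonal, so instead I would argue directly: since $U_1,V_1$ are invertible, $AC=BD$ is equivalent to $\Delta_A(U_2C)=(U_1^{-1}V_1)\Delta_B(V_2D)$. This still mixes a diagonal with an invertible factor, so the cleanest route is iterative. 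Observe that it suffices to find, for any non-degenerate $A$ and any single non-zero scalar operator $b$ acting on one coordinate, operators realizing a common multiple, and then to chain such steps; but more efficiently, I would prove the following two elementary reductions and combine them: (1) if the claim holds for $(A',B')$ and $A=A'W$, $B=B'W$ with $W$ invertible, then it holds for $(A,B)$ with the same $C,D$; (2) if the claim holds for $(A',B')$ then it holds for $(UA',B')$ for $U$ invertible, by replacing $C$ with $C$ and noting $UA'C=U B'D$ forces... — this does not immediately work either, so the honest statement is reduction (2'): multiplying $A$ on the \emph{left} by an invertible $U$ and $B$ on the left by the \emph{same} $U$ preserves the claim. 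Using (1) and (2') I can strip off $U_2$ from $A$ on the right (absorbing it into the yet-unknown $C$) and reduce $A$ to $U_1\Delta_A$; symmetrically reduce $B$ to $V_1\Delta_B$; and then by (2') multiply both by $U_1^{-1}$ is not legal since it changes $B$ differently — hence the remaining genuine content is: given $\Delta_A$ diagonal non-degenerate and an \emph{arbitrary} non-degenerate $B$, produce the common multiple.

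For that remaining core I would induct on $d(B)$, or better, invoke the following: by Lemma~\ref{lem:3.1}(a) applied to $B$ on the appropriate side one may triangularize, and a common right multiple of an upper-triangular $B$ with a diagonal $\Delta_A$ can be built column by column using the scalar least common multiple together with back-substitution to clear off-diagonal entries, exactly as in Gaussian elimination over the Euclidean ring $\mc K[\partial]$. Concretely, I expect the main obstacle to be exactly this back-substitution bookkeeping: ensuring at each stage that the accumulated right multipliers $C$ and $D$ remain non-degenerate (equivalently, that no diagonal entry is killed), which follows because each elementary step multiplies on the right by an invertible operator or by a non-zero scalar-type operator, and the Dieudonné determinant is multiplicative (Section~3) so it stays non-zero. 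The "resp." parenthetical statement $CA=DB$ is obtained by applying the proven statement to the adjoints $A^*,B^*$ (non-degenerate by Remark~\ref{rem:3.3}): from $A^*C'=B^*D'$ one gets $C'^*A=D'^*B$ by taking adjoints, so $C=C'^*$, $D=D'^*$ work. Thus the whole proposition rests on the scalar least-right-common-multiple fact from Section~2 plus the multiplicativity of $\det$ and the reductions via Lemma~\ref{lem:3.1}.
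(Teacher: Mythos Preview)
Your proposal points in the right direction—reduce to a triangular situation and leverage the scalar Ore property—but it does not close, and the reductions via Lemma~\ref{lem:3.1}(b) are, as you yourself notice mid-stream, tangled: the invertible factors $U_1,U_2,V_1,V_2$ sit on different sides of $A$ and $B$ and cannot be stripped simultaneously. More seriously, the step you call ``back-substitution bookkeeping'' is the entire content of the proof, and you neither carry it out nor correctly identify the obstacle. The difficulty is \emph{not} preserving non-degeneracy of $C,D$ (that is automatic, as you say); it is whether the off-diagonal equations are solvable at all. Already in the $2\times2$ upper-triangular case one is led to solve $a_1p-b_1q=f$ for a given $f\in\mc K[\partial]$, which requires $f\in a_1\mc K[\partial]+b_1\mc K[\partial]$—a condition you have no reason to expect. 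One can rescue this particular case by first right-multiplying the previously chosen bottom-right entries by a suitable $r$ (Ore gives $fr\in a_1\mc K[\partial]$), but you do not say this, and you give no indication how the argument organizes itself for $n>2$.

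The paper's proof avoids this morass. Using the $A=TU$ form of Lemma~\ref{lem:3.1}(a) it reduces \emph{both} $A$ and $B$ to upper triangular, then inducts on $n$: the inductive hypothesis handles the $(n{-}1)\times(n{-}1)$ block and the scalar corner separately, after which one may assume $A$ and $B$ agree except in the last column above the diagonal, say $U$ versus $V$. The coupling is then dispatched in one stroke by a kernel argument: the matrix $M=\left(\begin{smallmatrix}A_1 & U-V\\0&0\end{smallmatrix}\right)$ is degenerate, hence has a nonzero kernel vector $\tilde X=\left(\begin{smallmatrix}X\\x\end{smallmatrix}\right)$ over $\mc K(\partial)$; clearing denominators puts $\tilde X$ in $\mc K[\partial]^n$, and the relation $A_1X+Ux=Vx$ immediately yields explicit non-degenerate $E,F$ with $AE=BF$. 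That passage through the skewfield $\mc K(\partial)$ to produce the needed polynomial relation is precisely the idea your sketch is missing.
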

\begin{proof} 
By induction on $n$. We know it is true in the scalar case, 
see e.g. \cite{CDSK12}. 
By Lemma \ref{lem:3.1}, 
multiplying on the right by
invertible matrices, we may assume that both $A$ and $B$ 
are upper triangular matrices. 
Let
$$
A=\left(
\begin{array}{ccc|c}
 & & &        \\
A_1& & & U \\
 & & & \\ \hline
0& & & a\\
\end{array}\right),\,\,\,\,\,
B=\left(
\begin{array}{ccc|c}
 & & &        \\
B_1& & &V \\
 & & & \\ \hline
0& & & b\\
\end{array}\right),
$$
where $A_1,B_1\in M_{n-1}(\mc K[\partial])$ are upper triangular non-degenerate,
$U,V\in\mc K[\partial]^n$, and $a,b\in\mc K[\partial]\backslash\{0\}$.
By the inductive assumption, there exist $C_1,D_1\in M_{n-1}(\mc K[\partial])$ non-degenerate,
such that $A_1C_1=B_1D_1$,
and $c,d\in\mc K[\partial]\backslash\{0\}$ such that $ac=bd$.
Hence, after multiplying on the right $A$ by the block diagonal matrix with $C_1$ and $c$ on the diagonal,
and $B$ by the block diagonal matrix with $D_1$ and $d$ on the diagonal,
we may assume that
$$
A_1=B_1
\,\,\text{ and }\,\,
a=b
\,.
$$
Consider the matrix 
$$
M=\left(
\begin{array}{ccc|c}
 & & &        \\
A_1 & & &U-V \\
 & & & \\ \hline
0& & & 0\\
\end{array}\right)\in M_n(\mc K[\partial])\,.
$$
Viewed over the skewfield ${\mathcal{K}}(\partial)$, it has a non-zero kernel (since 
$M: \mathcal{K}(\partial)^n \mapsto \mathcal{K}(\partial)^n$ is not 
surjective), 
i.e. there exists a vector $\tilde X=\left(\begin{array}{c} X \\ x \end{array}\right) 
\in {{\mathcal{K}}(\partial)}^n$,
where $X\in {{\mathcal{K}}(\partial)}^{n-1}$ and $x\in\mc K(\partial)$,
such that $M\tilde X=0$,
i.e. 
\begin{equation}\label{eq:pro42}
A_1X+Ux=Vx
\,.
\end{equation}
Replacing $\tilde X$ by $\tilde Xd$, where 
$d$ is a non-zero common multiple of all the denominators of the entries of $\tilde X$,
we may assume that $\tilde X \in {{\mathcal{K}}[\partial]}^n$. 
Note also that, since $A_1$ is non-degenerate, it must be $x \neq 0$. 
To conclude the proof we just observe that, by \eqref{eq:pro42},
we have the identity $AE=BF$, where
$$
E=\left(
\begin{array}{ccc|c}
 & & &        \\
\id_{n-1}& & & X \\
 & & & \\ \hline
0& & & x\\
\end{array}\right), \,\,\,\,\,
F=\left(
\begin{array}{ccc|c}
 & & &        \\
\id_{n-1}& & &0 \\
 & & & \\ \hline
0& & & x\\
\end{array}\right).
$$
\end{proof}

\begin{remark} Proposition \ref{prop:4.2} can be derived from Goldie theory (see 
\cite[Theorem 2.1.12]{MR01}), but we opted for a simple direct argument.
\end{remark}
\begin{theorem}\label{20120119:cl1}
For every matrix differential operators $A,B\in M_n(\mathcal{K}[\partial])$
with $\det (B)\neq0$,
there exist matrices $A_1,B_1,D\in M_n({\mathcal{K}}[\partial])$,
with $\det {B_1}\neq0,\det D\neq0$,
such that:
\begin{enumerate}[$(i)$]
\item
$A=A_1D$, $B=B_1D$,
\item
$\ker A_1\cap\ker B_1=0$.
\end{enumerate}
\end{theorem}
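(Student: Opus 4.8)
The plan is to reduce the statement to a statement about common right divisors and then invoke the Euclidean-type machinery available for $M_n(\mc K[\partial])$. First I would set up the notion of a \emph{greatest common right divisor} of $A$ and $B$. Consider the left ideal generated by all common right divisors; more usefully, look at the left $M_n(\mc K[\partial])$-module $M_n(\mc K[\partial]) A + M_n(\mc K[\partial]) B \subseteq M_n(\mc K[\partial])$. Using Lemma \ref{lem:3.1}(a) one can put a generating matrix for this module into triangular form and thereby produce a matrix $D \in M_n(\mc K[\partial])$ with $M_n(\mc K[\partial]) A + M_n(\mc K[\partial]) B = M_n(\mc K[\partial]) D$; such a $D$ then right-divides both $A$ and $B$, so $A = A_1 D$, $B = B_1 D$ for some $A_1, B_1 \in M_n(\mc K[\partial])$, and conversely $D = PA + QB$ for suitable $P, Q$, which forces every common right divisor of $A$ and $B$ to be, up to an invertible factor, a right divisor of $D$. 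Since $B = B_1 D$ is non-degenerate, both $B_1$ and $D$ are non-degenerate by multiplicativity of the Dieudonn\'e determinant, so condition $(i)$ and the non-degeneracy of $B_1$ and $D$ are in hand. The subtlety is that $M_n(\mc K[\partial])$ is not a (two-sided) principal ideal domain, so I must be careful to work consistently with \emph{one-sided} ideals; the column-version of Lemma \ref{lem:3.1} is exactly what guarantees the needed generator exists.

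The remaining and main point is $(ii)$: $\ker A_1 \cap \ker B_1 = 0$ in any differential field extension. Here $\ker$ means the kernel of the operator acting on column vectors over an extension $\widetilde{\mc K} \supseteq \mc K$. The idea is that if $D$ really is a greatest common right divisor, then $A_1$ and $B_1$ are coprime, and coprimeness should translate into having no common solutions. Concretely, write $D = PA + QB = (PA_1 + QB_1)D$, so $(PA_1 + QB_1 - \id_n)D = 0$; since $D$ is non-degenerate it is injective over $\mc K(\partial)$, hence $PA_1 + QB_1 = \id_n$ as operators. Now if $v$ lies in the extension and $A_1 v = 0$ and $B_1 v = 0$, then applying $PA_1 + QB_1 = \id_n$ to $v$ gives $v = P(A_1 v) + Q(B_1 v) = 0$. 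That is the whole argument for $(ii)$, and it is short once the B\'ezout identity $PA_1 + QB_1 = \id_n$ is established.

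So the real work is concentrated in producing that B\'ezout identity, i.e. in showing the left ideal sum is principal with a generator $D$ that is a common right divisor and that divides all common right divisors. I would carry this out as follows: stack $A$ and $B$ into the $2n \times n$ matrix $\begin{pmatrix} A \\ B \end{pmatrix}$, perform elementary row operations over $\mc K[\partial]$ (these amount to left multiplication by invertible elements of $M_{2n}(\mc K[\partial])$) using the right-Euclidean algorithm entrywise to clear the bottom $n$ rows, arriving at $\begin{pmatrix} D \\ 0 \end{pmatrix}$; reading off the inverse row transformation gives both $A = A_1 D$, $B = B_1 D$ and the coefficients $P, Q$ with $D = PA + QB$. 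Non-degeneracy of $D$ follows because $D$ right-divides the non-degenerate $B$. The one technical nuisance to watch is justifying that the row-reduction terminates and stays within $M_n(\mc K[\partial])$ — this is the matrix analogue of the scalar Euclidean algorithm and is exactly the content behind Lemma \ref{lem:3.1}, so I would either cite that lemma's proof technique or spell out the straightforward induction on the total order of the entries being cleared.

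I expect the main obstacle to be purely organizational rather than deep: making sure the one-sided (left ideal / right divisor) bookkeeping is consistent throughout, and phrasing the row-reduction so that it clearly yields simultaneously the factorizations $A = A_1 D$, $B = B_1 D$ \emph{and} the B\'ezout coefficients $P, Q$. Once $PA_1 + QB_1 = \id_n$ is in place, parts $(i)$ and $(ii)$ both follow immediately, and non-degeneracy of $B_1$ and $D$ is automatic from $\det(B) = \det(B_1)\det(D) \neq 0$.
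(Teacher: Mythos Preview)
Your approach is correct and genuinely different from the paper's. The paper argues by induction on $d(B)$: after reducing (via Lemma \ref{lem:3.1}) to $A$ upper triangular and $B$ diagonal, it takes a non-zero vector $F=(f_i)\in\ker A\cap\ker B$, locates the last non-zero coordinate $f_k$, and explicitly factors out the order-one operator $\partial-f_k'/f_k$ in the $k$-th column to produce $A=A_1D$, $B=B_1D$ with $d(D)=1$, then recurses on the pair $(A_1,B_1)$ with $d(B_1)=d(B)-1$. No B\'ezout identity and no principality of one-sided ideals is invoked.

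Your route instead uses the Hermite-type row reduction of the stacked $2n\times n$ matrix $\left(\begin{smallmatrix}A\\B\end{smallmatrix}\right)$ to obtain a generator $D$ of the left ideal $M_n(\mc K[\partial])A+M_n(\mc K[\partial])B$, and then cancels $D$ on the right to get the B\'ezout identity $PA_1+QB_1=\id_n$. This is exactly the principality of left ideals in $M_n(\mc K[\partial])$; the paper does appeal to that fact, but only later (in the Remark following Theorem \ref{th:6.4}, citing \cite{MR01}) to show the equivalence $(iii)\Leftrightarrow(iv)$. So you are effectively folding that later remark into the proof of the present theorem. Two small points to tighten: first, Lemma \ref{lem:3.1} as stated is for square matrices, so you should either note explicitly that its proof extends verbatim to rectangular matrices with more rows than columns (the inductive step needs that the remaining $(2n-1)\times(n-1)$ block still has full column rank, which follows since the row span contains that of the non-degenerate $B$), or simply cite the principality result directly. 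Second, your argument actually yields the stronger conclusion $\ker A_1\cap\ker B_1=0$ over \emph{every} differential field extension, not just over $\mc K$; the paper only obtains the latter here, postponing the extension to the linear closure to Proposition \ref{prop:6.6} via a separate Galois-theoretic argument. What the paper's approach buys is an elementary, self-contained construction that avoids module-theoretic input; what yours buys is brevity and a sharper conclusion in one stroke.
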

\begin{proof}
We will prove the statement by induction on $d(B)$. 
If $d(B)=0$, then $B$ is invertible in $M_n(\mathcal{K}[\partial])$ by 
Corollary \ref{cor:3.2}
(and $\ker B=0$). In this case the claim holds trivially, taking $D=\id_n$. 
Clearly, if $P \in M_n(\mathcal{K}[\partial])$ is invertible,
then $\ker A=\ker PA$.
Hence, if $P$ and $Q$ are invertible elements 
of $M_n(\mathcal{K}[\partial])$,
then the statement holds for $A$ and $B$ if and only if it holds for $PA$ 
and $QB$.
Furthermore, 
if $R\in M_n(\mathcal{K}[\partial])$ is invertible,
replacing $D$ by $R^{-1}D$ we get that
the statement holds for $A$ and $B$ if and only if it holds for $AR$ and $BR$.
Therefore, by Lemma \ref{lem:3.1}, we may assume, without loss of generality, that
$A$ is upper triangular and $B$ is diagonal.
If $\ker A\cap\ker B=0$ there is nothing to prove.
Let then $F=\big(f_i\big)_{i=1}^n$ be a non-zero element of $\ker A\cap\ker B$,
and let $k\in \{1,\dots,n\}$ be such that $f_k \neq 0$, $f_{k+1}=\dots=f_n=0$.
The condition $AF=0$ gives for $i=1,\dots,k$,
$$
A_{i,1}(\partial)f_1+\dots+A_{i,k-1}(\partial)f_{k-1}+A_{ik}(\partial)f_k=0
\quad \text{ in } \mathcal{K}.
$$ 
This implies that there is some $L_i(\partial)\in {\mathcal{K}}[\partial]$ 
such that
\begin{equation}\label{20120124:eq1}
A_{i,1}(\partial)\circ\frac{f_1}{f_k}+\dots+A_{i,k-1}(\partial)\circ\frac{f_{k-1}}{f_k}
+A_{ik}(\partial)=L_i(\partial)\circ\Big(\partial-\frac{f_k'}{f_k}\Big)
\quad \text{ in }  {\mathcal{K}}[\partial]\,.
\end{equation} 
Indeed, the LHS above is zero when applied to $f_k\in {\mathcal{K}}$,
hence it must be divisible, on the right, by $\partial-\frac{f_k'}{f_k}$.
Similarly, from the condition $BF=0$ we have that
$B_{ii}(\partial)f_i=0$ in $\mc {\mathcal{K}}$ for every $i=1,\dots,k$,
which implies that there is some $M_i(\partial)\in\mc {\mathcal{K}}[\partial]$ such that
\begin{equation}\label{20120124:eq2}
B_{ii}(\partial)\circ\frac{f_i}{f_k}=M_i(\partial)\circ\Big(\partial-\frac{f_k'}{f_k}\Big)
\quad \text{ in } {\mathcal{K}}[\partial]\,.
\end{equation}
Let then $A_1,B_1,D\in M_n({\mathcal{K}}[\partial])$
be the matrices defined as 
the matrices $A,B,\id$ with the $k$-th column replaced, respectively, by the 
following columns
$$
\left[\begin{array}{c} L_1 \\ \vdots  \\ L_{k-1} \\ L_k \\ 0 \\ \vdots \\ 0 \end{array}\right]
\,\,,\,\,\,\,
\left[\begin{array}{c} M_1 \\ \vdots  \\ M_{k-1} \\ M_k \\ 0 \\ \vdots \\ 0 \end{array}\right]
\,\,,\,\,\,\,
\left[\begin{array}{c} -f_1/f_k \\ \vdots  \\ -f_{k-1}/f_k \\ \partial-f_k'/f_k \\ 0 \\ \vdots \\ 0 \end{array}\right]
\,.
$$
It follows from equations (\ref{20120124:eq1}) and (\ref{20120124:eq2})
that $A_1D=A$ and $B_1D=B$.
Moreover, since $\det D=\lambda$, we have $d(B_1)=d(B)-1$.
The statement follows by the inductive assumption.
\end{proof}
%
%\pecetta{
%probably this theorem holds also when $\det A=\det B=0$.
%}

%
%\begin{proof}
%Using Gauss elimination over differential operators,
%we may assume that $D$ is upper triangular (see e.g. \cite[Lem.A.2.6]{DSK11}).
%
%In this case the statement is immediate.
%\end{proof}

%%%%%%%%%%%%%%%%%%%%%%%%%%%%%%%%%%%%
\section{Linear closure of a differential field}

In this section we define a natural embedding of a differential field in a linearly closed one using the theory of Picard-Vessiot extensions. One may find 
all relevant definitions and constructions in Chapter 3 of \cite{Mag94}.

Recall \cite{DSK11} that a differential field $\mc K$ is called \emph{linearly closed}
if every homogeneous linear differential equation of order $n\geq1$,
\begin{equation}\label{20120121:eq1}
a_nu^{(n)}+\dots+a_1u'+a_0u=0\,,
\end{equation}
with $a_0,\dots,a_n$ in $\mc K$, $a_n\neq0$,
has a non-zero solution $u\in\mc K$.

It is easy to show that
the solutions of equation \eqref{20120121:eq1} in a differential field $\mc K$
form a vector space over the field of constant $\mc C$
of dimension less than or equal to $n$,
and equal to $n$ if $\mc K$ is linearly closed (see e.g. \cite{DSK11}).
\begin{remark}
In a linearly closed field, it is also true that every inhomogeneous linear
differential equation $L(\partial)u=b$ has a solution because the homogeneous 
differential equation $((1/b)L(\partial)u)'=0$ has a solution $u$ such that 
$L(\partial)u \neq 0$ (the solutions of $((1/b)L(\partial)u)'=0$ form a vector 
space over the subfield of constants $\mc C$ of dimension strictly bigger than the one of 
$Ker L$).
\end{remark}
More generally, if $A\in M_{n}(\mc K[\partial])$
is a non-degenerate matrix differential operator 
%with non-zero Dieudonn\`e determinant,
and $b\in\mc K^n$,
then the inhomogeneous system of linear differential equations
in $u=\big(u_i\big)_{i=1}^n$,
\begin{equation}\label{20120121:eq5}
A(\partial)u=b\,,
\end{equation}
admits the affine space (over $\mc C$) of solutions
of dimension less than or equal to $d (A)$,
and equal to $d(A)$ if $\mc K$ is linearly closed. (This follows, for example,
from Lemma \ref{lem:3.1}(b).)

\begin{definition}
Let $\mc K$ be a differential field with the subfield of constants 
$\mc C$, and let $L\in \mc K[\partial]$ be a differential operator over 
$\mc K$ of order $n$. 
A differential field  extension $\mc K  \subset  \mc L$ is called
a Picard-Vessiot extension with respect to $L$ if there are no new constants 
in $\mc L$ and if $\mc L=\mc K (y_1,...,y_n)$, where the $y_i$ are 
linearly independent solutions over $\mc C$ of the equation $Ly=0$.
\end{definition}
Proofs of the following two propositions can be found in \cite{Mag94}.
\begin{proposition}\label{prop:5.3}
Let $\mc K$ be a differential field with algebraically closed subfield of constants $\mc C$ and let $L$ be a differential operator of order $n$ over $\mc K$.
Then there exists a Picard-Vessiot extension of $\mc K$ with respect to $L$ 
and it is unique up to isomorphism.
\end{proposition}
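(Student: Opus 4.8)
\smallskip
\noindent\emph{Proof strategy.}
The plan is to prove the statement as the classical Picard--Vessiot existence-and-uniqueness theorem, following the construction in \cite{Mag94}. Replacing $L$ by $a_n^{-1}L$, where $a_n$ is its leading coefficient, we may assume $L=\partial^n+b_{n-1}\partial^{n-1}+\dots+b_0$ is monic; this does not change the solution space. \emph{Existence.} First I would form the polynomial ring $R_0=\mc K\big[y_i^{(j)}\,:\,1\le i\le n,\,0\le j\le n-1\big]$ in $n^2$ indeterminates and extend $\partial$ to a derivation of $R_0$ by $\partial y_i^{(j)}=y_i^{(j+1)}$ for $j\le n-2$ and $\partial y_i^{(n-1)}=-\sum_{j=0}^{n-1}b_j\,y_i^{(j)}$, so that each $y_i:=y_i^{(0)}$ satisfies $Ly_i=0$. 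Writing $W=\det\big(y_i^{(j)}\big)_{1\le i\le n,\,0\le j\le n-1}$ for the Wronskian and expanding $\partial W$ by the product rule for determinants, all terms but one involve a repeated row and one finds $\partial W=-b_{n-1}W$; hence $W$ is a non-zero-divisor and $R:=R_0[W^{-1}]$ is again a (nonzero) differential ring. By Zorn's lemma I pick a maximal proper differential ideal $\mf m\subset R$ and set $P:=R/\mf m$ --- a nonzero \emph{simple} differential ring (no nonzero proper differential ideal), finitely generated as a $\mc K$-algebra.

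The one nonformal input, valid precisely because $\mc K$ has characteristic zero and $\mc C$ is algebraically closed, is the following \emph{Lemma}: a simple differential ring that is finitely generated as a $\mc K$-algebra is an integral domain whose field of constants equals $\mc C$. Granting it, put $\mc L:=\operatorname{Frac}(P)$ with its induced derivation and let $\eta_i$ be the image of $y_i$ in $P\subseteq\mc L$. Then $L\eta_i=0$; the image of $W$ is a unit in $P$, so the $\eta_i$ are linearly independent over $\mc C$ (in a differential field the solution space of $Ly=0$ has dimension at most $n$, as recalled above); and $P$ is generated over $\mc K$ by the derivatives $\partial^j\eta_i$ and by $W(\eta)^{-1}$, the latter lying already in the field generated by those derivatives, so that $\mc L$ is the differential field generated over $\mc K$ by $\eta_1,\dots,\eta_n$, i.e. $\mc L=\mc K(\eta_1,\dots,\eta_n)$. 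Finally $\mc L$ has no new constants: if $x\in\mc L$ and $\partial x=0$, then $\{c\in P\,:\,cx\in P\}$ is a differential ideal of $P$ (from $cx\in P$ one gets $\partial(cx)=(\partial c)x\in P$) which is nonzero (it contains a nonzero denominator of $x$), hence equals $P$; so $x\in P$ and therefore $x\in\mc C$. Thus $\mc L$ is a Picard--Vessiot extension of $\mc K$ with respect to $L$.

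\emph{Uniqueness.} Let $\mc L_1,\mc L_2$ be two such extensions, with fundamental systems $\eta_1,\dots,\eta_n$ and $\zeta_1,\dots,\zeta_n$. Inside $\mc L_r$ let $R_r$ be the $\mc K$-subalgebra generated by all derivatives of the given solutions together with the inverse of their Wronskian; $R_r$ is a finitely generated differential $\mc K$-algebra with no new constants (being contained in $\mc L_r$), and it is a simple differential ring --- a standard structural property of Picard--Vessiot rings, see \cite{Mag94}. Next I would form the nonzero finitely generated differential $\mc K$-algebra $T:=R_1\otimes_{\mc K}R_2$, choose a maximal proper differential ideal $\mf M\subset T$, and set $\bar T:=T/\mf M$; by the Lemma, $\bar T$ is a domain with field of constants $\mc C$. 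Since $R_1$ and $R_2$ are simple, the structure maps $R_1\to\bar T$ and $R_2\to\bar T$ are injective. In the differential field $\operatorname{Frac}(\bar T)$, both $\{\eta_i\}$ and $\{\zeta_i\}$ (viewed through these maps) are $n$-element, $\mc C$-linearly independent systems of solutions of $L$ --- their Wronskians remain units --- hence each is a $\mc C$-basis of the solution space, which is therefore $n$-dimensional; so $\zeta_i=\sum_{l}c_{il}\eta_l$ with $(c_{il})$ invertible over $\mc C$. As the $c_{il}$ are constants, $\partial^j\zeta_i=\sum_l c_{il}\partial^j\eta_l$ and the two Wronskians differ by the unit $\det(c_{il})$; hence $R_1$ and $R_2$ coincide as subrings of $\bar T$. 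The resulting identification $\mc L_1=\operatorname{Frac}(R_1)=\operatorname{Frac}(R_2)=\mc L_2$ is an isomorphism of differential fields over $\mc K$.

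The hard part is the Lemma, which is the only non-formal ingredient. Its assertion about constants is where algebraic closedness of $\mc C$ is indispensable: one shows that the constants of such a ring form a field that is finite over $\mc C$ (a Zariski-lemma/Nullstellensatz argument), hence equal to $\mc C$; the integral-domain assertion lies somewhat deeper and is likewise part of the structure theory of Picard--Vessiot rings in characteristic zero. Everything else above is formal bookkeeping with differential ideals, maximal differential ideals, Wronskians, and tensor products.
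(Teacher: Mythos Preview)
The paper does not actually prove this proposition: it simply cites \cite{Mag94} (``Proofs of the following two propositions can be found in \cite{Mag94}''). Your proposal is a correct outline of exactly the classical argument given there (and, with minor variations, in \cite{PS03}): build the universal solution algebra, localize at the Wronskian, mod out by a maximal differential ideal, and invoke the key structure lemma that a simple differential $\mc K$-algebra of finite type is a domain with no new constants when $\mc C$ is algebraically closed; for uniqueness, tensor two Picard--Vessiot rings, pass to a simple quotient, and compare fundamental systems via a constant matrix. So your approach and the paper's (deferred) approach coincide.

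One small point worth tightening: in the uniqueness argument you appeal to the fact that each $R_r$ is itself a \emph{simple} differential ring. That is true, but it is not an immediate consequence of $R_r$ sitting inside a field with no new constants; it is a separate theorem (essentially that a Picard--Vessiot ring is the unique simple quotient of the universal solution algebra), and you are right to flag it as another input from \cite{Mag94}. Likewise, your sketch of the Lemma's proof (``constants form a field finite over $\mc C$ by a Nullstellensatz argument'') is a bit glib --- the actual argument passes through the torsor description or through a careful analysis of minimal prime ideals --- but since you explicitly identify this as the one nonformal ingredient to be taken from the reference, that is appropriate for a proof that the paper itself leaves to citation.
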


\begin{proposition}\label{prop:5.4}
If $\mc K \subset \mc L$ is an extension of differential fields and $\mc K \subset \mc {E_i} \subset \mc L$, $i=1,2$, are two Picard-Vessiot subextensions of $\mc K$, 
then the composite field ${{\mc E}_1}{{\mc E}_2}$ 
(i.e. the minimal subfield of $\mc L$ containing both $\mc E_1$ and $\mc E_2$)
is a Picard-Vessiot extension of $\mc K$ as well.
\end{proposition}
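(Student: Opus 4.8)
The plan is to reduce everything to a single scalar differential operator: I will exhibit $N\in\mc K[\partial]$ for which $\mc E_1\mc E_2$ is generated over $\mc K$ by a full system of $\mc C$-linearly independent solutions of $Nu=0$, and then verify that $\mc E_1\mc E_2$ has no new constants; by the definition of a Picard--Vessiot extension, these two facts are precisely what is required.

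Write $\mc E_i=\mc K\big(y^{(i)}_1,\dots,y^{(i)}_{n_i}\big)$, Picard--Vessiot with respect to $L_i\in\mc K[\partial]$ of order $n_i$, where $y^{(i)}_1,\dots,y^{(i)}_{n_i}$ are $\mc C$-linearly independent solutions of $L_iu=0$ in $\mc L$; put $V_i:=\Span_{\mc C}\{y^{(i)}_1,\dots,y^{(i)}_{n_i}\}$ and $V:=V_1+V_2\subseteq\mc L$, and note that $\mc K(V)=\mc E_1\mc E_2$. Since $\mc K[\partial]$ is a left principal ideal domain (Section 2), choose $N\in\mc K[\partial]$ generating the left ideal $\mc K[\partial]L_1\cap\mc K[\partial]L_2$ and $D\in\mc K[\partial]$ generating $\mc K[\partial]L_1+\mc K[\partial]L_2$. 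Then $N=P_1L_1=P_2L_2$ and $D=Q_1L_1+Q_2L_2$ for suitable $P_i,Q_i\in\mc K[\partial]$, so that $L_iu=0$ implies $Nu=0$, and $L_1u=0,\,L_2u=0$ imply $Du=0$, in any differential field extension of $\mc K$. In particular $V\subseteq\ker N$, and counting $\mc K$-dimensions of $\mc K[\partial]$ modulo these ideals (as in Section 2) gives $\ord(N)+\ord(D)=n_1+n_2$.

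The main obstacle is to prove that $\mc E_1\mc E_2$ has no new constants over $\mc K$. If the ambient field $\mc L$ has no new constants --- which is the case relevant to the applications of this proposition and is part of the setup in \cite{Mag94}, and some hypothesis of this kind is genuinely necessary --- this is immediate, since $\mc E_1\mc E_2\subseteq\mc L$. In general one proceeds through the Picard--Vessiot ring formalism of \cite{Mag94}: writing $\mc E_i=\mathrm{Frac}(R_i)$ for the Picard--Vessiot ring $R_i\subseteq\mc E_i$ of $L_i$ (a finitely $\partial$-generated $\partial$-simple $\mc K$-algebra with field of constants $\mc C$), one considers the multiplication map $\mc E_1\otimes_{\mc K}R_2\to\mc L$ and passes to the quotient by a maximal $\partial$-ideal containing its kernel; the essential inputs are that a finitely $\partial$-generated $\partial$-simple algebra over a field with algebraically closed constants has no new constants, together with the uniqueness of Picard--Vessiot extensions (Proposition \ref{prop:5.3}).

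Granting this, the proof concludes by a dimension count. Every element of $V_1\cap V_2$ is a common solution of $L_1u=0$ and $L_2u=0$, hence a solution of $Du=0$; since $\mc E_1\mc E_2$ has no new constants, the solutions of $Du=0$ in $\mc E_1\mc E_2$ form a $\mc C$-vector space of dimension at most $\ord(D)$, so $\dim_{\mc C}(V_1\cap V_2)\le\ord(D)$. Therefore
\[
\ord(N)=n_1+n_2-\ord(D)\le n_1+n_2-\dim_{\mc C}(V_1\cap V_2)=\dim_{\mc C}V .
\]
On the other hand $V\subseteq\ker_{\mc E_1\mc E_2}N$, and, again because $\mc E_1\mc E_2$ has no new constants, $\dim_{\mc C}\ker_{\mc E_1\mc E_2}N\le\ord(N)$; hence $\ord(N)\le\dim_{\mc C}V\le\dim_{\mc C}\ker_{\mc E_1\mc E_2}N\le\ord(N)$, so all of these are equal and $V=\ker_{\mc E_1\mc E_2}N$. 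Thus $V$ is a full system of $\mc C$-linearly independent solutions of $Nu=0$ in $\mc E_1\mc E_2$, and $\mc E_1\mc E_2=\mc K(V)$ is a Picard--Vessiot extension of $\mc K$ with respect to $N$.
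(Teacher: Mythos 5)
The paper itself offers no proof of this proposition (it defers to [Mag94]), so the relevant comparison is with the standard treatment there, which goes through Picard--Vessiot rings and tensor products. Your route is more elementary and essentially self-contained: writing $\mc E_i=\mc K(V_i)$ with $V_i$ the solution space of $L_i$, taking $N$ the least common left multiple and $D$ the greatest common right divisor of $L_1,L_2$, and combining $\ord N+\ord D=n_1+n_2$ with the two bounds $\dim_{\mc C}(V_1\cap V_2)\le\ord D$ (valid already because $\mc E_1$ has no new constants) and $\dim_{\mc C}\ker_{\mc E_1\mc E_2}N\le\ord N$ does give $\dim_{\mc C}(V_1+V_2)=\ord N$, so that $\mc E_1\mc E_2=\mc K(V_1+V_2)$ is generated by a full system of solutions of $N$. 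The degree formula is not actually stated in Section 2, but it follows from the exact sequence of left $\mc K[\partial]$-modules
$$0\to\mc K[\partial]/\mc K[\partial]N\to\mc K[\partial]/\mc K[\partial]L_1\oplus\mc K[\partial]/\mc K[\partial]L_2\to\mc K[\partial]/\mc K[\partial]D\to0\,,$$
all terms being finite-dimensional over $\mc K$; you should say this explicitly.

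The only real defect is your handling of the constants. You should simply add (or make explicit) the hypothesis that $\mc L$ has no new constants --- this is the setting of [Mag94] and of every application in this paper, where $\mc L=\mc K_{i+1}$ is a Picard--Vessiot compositum and hence has no new constants --- and delete the paragraph sketching a ``general'' argument via Picard--Vessiot rings. No such general argument can exist, because without a hypothesis on the constants of $\mc L$ the statement is false: take $\mc K=\mathbb{C}(x)$ and $\mc L=\mathbb{C}(c)(x,e^x)$ with $c$ a transcendental constant; then $\mc E_1=\mc K(e^x)$ and $\mc E_2=\mc K(ce^x)$ are both Picard--Vessiot for $u'=u$, while $\mc E_1\mc E_2$ contains the new constant $c$, hence is not Picard--Vessiot. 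Your own parenthetical remark that ``some hypothesis of this kind is genuinely necessary'' is exactly right; committing to that hypothesis makes your proof complete, whereas the Picard--Vessiot-ring sketch is both unnecessary in the intended setting and impossible to complete in the generality you suggest.
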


\begin{definition}
Let $\mc K$ be a differential field with algebraically closed subfield of 
constants $\mc C$. The unique minimal extension $\mc K \subset \mc L$ such that
\begin{enumerate}[(a)]
\item
$\mc L$ is the union of its Picard-Vessiot subextensions of $\mc K$;
\item
$\mc L$ contains an isomorphic copy of every Picard-vessiot extension 
of $\mc K$,
\end{enumerate}
is called the Picard-Vessiot compositum of $\mc K$.
\end{definition}

It is proved in \cite{Mag94} that the Picard-Vessiot compositum of $\mc K$
exists, and is unique up to isomorphism.

\begin{definition}
Let $\mc K$ be a differential field with algebraically closed subfield of 
constants. Let ${\mc K}_0=\mc K$ and, for $i\in \mathbb{Z}_+$, let 
${\mc K}_{i+1}$ be the Picard-Vessiot compositum of ${\mc K}_i$. 
We call $\mc L= \cup_i {\mc K}_i$ the \emph{linear closure} 
of $\mc K$ (it is called the successive Picard-Vessiot closure in \cite{Mag94} ).
\end{definition}
\begin{remark}
The linear closure is linearly closed.
\end{remark}
\begin{remark}
The linear closure of a differential field ${\mc K}$ with algebraically 
closed subfield of constants is the unique, up to isomorphism, minimal linearly closed extension of $\mc K$ with no new constants. To see this, one needs 
to show that for any linearly closed extension $\mc L$ of $\mc K$ without 
new constants, one can extend the embedding 
$\mc K \hookrightarrow \mc L$ to an embedding of the Picard-Vessiot 
compositum of $\mc K$, ${\mc K}_1 \hookrightarrow \mc L$. By Zorn's lemma one 
can find a maximal subextension 
$\mc K \subset \tilde{\mc K} \subset {\mc K}_1$ extending the embedding 
$\mc K \hookrightarrow \mc L$. 
Denote by $\phi$ the embedding $\tilde{\mc K} \hookrightarrow \mc L$. 
Suppose that $\tilde{\mc K} \subsetneq {\mc K}_1$. This means that, by definition 
of ${\mc K}_1$, we have a non-trivial Picard-Vessiot extension 
$\tilde{\mc K} \subset \mc P \subset {\mc K}_1$ for a differential operator
$L$ over $\mc K$. As $\mc L$ is linearly closed, we can find a Picard-Vessiot 
extension $\phi(\tilde{K}) \subset {\mc P}_1 \subset \mc L$ for the same 
differential operator. By Proposition \ref{prop:5.3}, these two Picard-Vessiot 
extension are isomorphic and one can extend the embedding
$\tilde{\mc K} \hookrightarrow \mc L$ to an embedding
$\mc P \hookrightarrow \mc L$, which is a contradiction. 
\end{remark}
\begin{lemma}\label{lem:5.9}
Let $\mc K$ be a differential field with algebraically closed subfield of 
constants, let $\mc L$ be its linear closure, and let $X$ be a finite subset 
of $\mc L$, not contained in $\mc K$. Then there is an integer $i$ and a 
Picard-Vessiot extension ${\mc K}_i \subset \mc P  \subset {\mc K}_{i+1}$ 
of $\mc K_i$ such that $X \subset \mc P$ but $X \not\subset {\mc K}_i$.
\end{lemma}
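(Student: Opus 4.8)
The plan is to peel off the layers of the linear closure one at a time, using the fact that $X$ is finite. First I would write $\mc L=\bigcup_i \mc K_i$ and, since $X$ is a finite subset of $\mc L$, observe that there is a smallest integer $j\geq 1$ with $X\subset \mc K_j$; by minimality, $X\not\subset \mc K_{j-1}$. Thus it suffices to treat the case where $X\subset \mc K_{i+1}$ but $X\not\subset \mc K_i$ (with $i=j-1$), and to produce a single Picard-Vessiot extension $\mc K_i\subset \mc P\subset \mc K_{i+1}$ of $\mc K_i$ containing $X$.

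Next I would use the structure of $\mc K_{i+1}$ as the Picard-Vessiot compositum of $\mc K_i$: by definition it is the union of its Picard-Vessiot subextensions of $\mc K_i$. So for each of the finitely many elements $x\in X$, there is a Picard-Vessiot extension $\mc K_i\subset \mc P_x\subset \mc K_{i+1}$ of $\mc K_i$ with $x\in \mc P_x$. Now I would invoke Proposition \ref{prop:5.4} repeatedly: the compositum of two Picard-Vessiot subextensions of $\mc K_i$ inside $\mc K_{i+1}$ is again a Picard-Vessiot extension of $\mc K_i$. Since $X$ is finite, finitely many applications of this give a single Picard-Vessiot extension $\mc K_i\subset \mc P\subset \mc K_{i+1}$ of $\mc K_i$ with $X\subset \mc P$.

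It remains to check that $X\not\subset \mc K_i$, but this is automatic: we chose $i$ so that $X\not\subset \mc K_i$. So the lemma follows. The only point requiring a little care is the base of the induction on $i$: if $i=0$ the hypothesis is exactly that $X\not\subset \mc K=\mc K_0$ while $X\subset \mc K_1$, and the argument above applies verbatim with $\mc K_i=\mc K$.

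I do not expect any serious obstacle here; the statement is essentially a finiteness-and-compositum bookkeeping argument. The one subtlety to be careful about is making sure Proposition \ref{prop:5.4} is applied inside a fixed ambient field (here $\mc K_{i+1}$, or more precisely $\mc L$), so that ``composite field'' is well defined, and that the iterated compositum of finitely many Picard-Vessiot extensions of $\mc K_i$ is still such an extension — which follows by an immediate induction on the number of factors from the two-factor case in Proposition \ref{prop:5.4}.
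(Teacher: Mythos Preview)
Your argument is correct and matches the paper's proof essentially verbatim: choose the minimal $i$ with $X\subset\mc K_{i+1}$, use that $\mc K_{i+1}$ is the Picard--Vessiot compositum of $\mc K_i$ to cover each element of $X$ by a Picard--Vessiot subextension, and then apply Proposition~\ref{prop:5.4} (inside the ambient field) finitely many times. Your added remarks about the ambient field and the finite induction are just making explicit what the paper leaves implicit.
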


\begin{proof} 
Take the minimal $i$, such that $X \subset {\mc K}_{i+1}$.
Since $\mc K_{i+1}$ is the Picard-Vessiot compositum of $\mc K_i$,
every element of $X$ lies in a Picard-Vessiot extention of $\mc K_i$.
The claim follows by the fact that 
the composite of two Picard-Vessiot extension is still a Picard-Vessiot extension (Proposition \ref{prop:5.4}).
\end{proof}

\begin{lemma}\label{20120121:cl3}
Let $\mc K\subset\mc L$ be a differential field extension,
and let $\mc C\subset\mc D$ be the corresponding field extension of constants.
If $\alpha\in\mc L$ is algebraic over $\mc C$,
then $\alpha\in\mc D$ and the minimal monic polynomial for $\alpha$
over $\mc K$ has coefficients in $\mc C$.
\end{lemma}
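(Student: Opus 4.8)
The plan is to prove the two assertions in sequence, the first being essentially immediate and the second requiring a differentiation argument. First I would show that $\alpha \in \mc D$, i.e. that $\alpha' = 0$. Let $p(t) = t^m + c_{m-1} t^{m-1} + \dots + c_0$ be the minimal monic polynomial of $\alpha$ over $\mc C$; its coefficients $c_i$ lie in $\mc C$, hence satisfy $c_i' = 0$. Applying the derivation $\partial$ of $\mc L$ to the relation $p(\alpha) = 0$ and using the Leibniz rule together with $c_i' = 0$, I obtain
\begin{equation}\label{20120121:eqpfa}
\big(m\alpha^{m-1} + (m-1)c_{m-1}\alpha^{m-2} + \dots + c_1\big)\alpha' = 0
\quad\text{in } \mc L\,.
\end{equation}
The factor in parentheses is $p'(\alpha)$, where $p'$ is the formal derivative of $p$. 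Since $\mc C$ has characteristic $0$ and $p$ is irreducible over $\mc C$, the polynomial $p'$ has degree $m-1 \geq 0$ and is nonzero; as $p$ is the minimal polynomial of $\alpha$, it cannot divide $p'$, so $p'(\alpha) \neq 0$ in $\mc L$. Since $\mc L$ is a field, \eqref{20120121:eqpfa} forces $\alpha' = 0$, that is, $\alpha \in \mc D$.

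Next I would handle the claim that the minimal monic polynomial of $\alpha$ over $\mc K$ in fact has coefficients in $\mc C$. Let $q(t) = t^r + b_{r-1}t^{r-1} + \dots + b_0 \in \mc K[t]$ be the minimal monic polynomial of $\alpha$ over $\mc K$; a priori $b_i \in \mc K$. The idea is to show each $b_i' = 0$, so that $b_i \in \mc C = \ker\partial$. Differentiating $q(\alpha) = 0$ and using $\alpha' = 0$ (just established), all the terms coming from differentiating powers of $\alpha$ vanish, leaving
\begin{equation}\label{20120121:eqpfb}
b_{r-1}'\alpha^{r-1} + \dots + b_1'\alpha + b_0' = 0
\quad\text{in } \mc L\,.
\end{equation}
This exhibits $\alpha$ as a root of the polynomial $\tilde q(t) = b_{r-1}'t^{r-1} + \dots + b_0' \in \mc K[t]$, which has degree at most $r-1 < r$. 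Since $q$ is the minimal polynomial of $\alpha$ over $\mc K$, no nonzero polynomial in $\mc K[t]$ of degree $< r$ can vanish at $\alpha$; hence $\tilde q = 0$, i.e. $b_i' = 0$ for all $i$, so $b_i \in \mc C$. Thus $q \in \mc C[t]$, as desired.

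There is essentially no serious obstacle here; the only point requiring care is the reduction in the first part — making sure that $p'(\alpha) \neq 0$, which uses characteristic $0$ and irreducibility so that $p \nmid p'$ for degree reasons — and, in the second part, being careful that differentiating $q(\alpha)$ genuinely kills every term involving $\alpha'$ once we know $\alpha' = 0$, so that \eqref{20120121:eqpfb} really is a polynomial identity of degree $< r$. One should also note that the minimal polynomial of $\alpha$ over $\mc K$ divides $p$ in $\mc K[t]$, so $\alpha$ is indeed algebraic over $\mc K$ and $q$ exists; but this is automatic. No additional machinery beyond the definitions is needed.
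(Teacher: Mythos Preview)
Your proof is correct and follows essentially the same approach as the paper's own proof: differentiate the minimal polynomial relation over $\mc C$ to obtain $p'(\alpha)\alpha'=0$ and conclude $\alpha'=0$ by minimality, then differentiate the minimal polynomial relation over $\mc K$ and use $\alpha'=0$ to see that the derived coefficients give a relation of lower degree, forcing them to vanish. You are simply more explicit than the paper in justifying $p'(\alpha)\neq 0$ via characteristic~$0$ and irreducibility, whereas the paper compresses this into the phrase ``by minimality of $P(x)$''.
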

\begin{proof}
Let $P(x)=x^n+c_1x^{n-1}+\dots+c_n\in\mc C[x]$ be the minimal monic polynomial
with coefficients in $\mc C$ satisfied by $\alpha$.
Letting $x=\alpha$ and applying the derivative $\partial$ we get
$\big(n\alpha^{n-1}+(n-1)c_1\alpha^{n-2}+\dots+c_n\big)\alpha'=0$.
By minimality of $P(x)$, it must be $\alpha'=0$, i.e. $\alpha\in\mc D$.

Similarly, for the second statement, 
let $Q(x)=x^m+f_1x^{m-1}+\dots+f_m\in\mc K[x]$ be the minimal monic polynomial
with coefficients in $\mc K$ satisfied by $\alpha$.
Letting $x=\alpha$ and applying the derivative $\partial$ we get
$f_1'\alpha^{m-1}+\dots+f_m'=0$,
which, by minimality of $Q(x)$, implies $f_1,\dots,f_m\in\mc C$.
\end{proof}

\begin{lemma}[see e.g. \cite{PS03}]\label{20120121:cl2}
Let $\mc K$ be a differential field with subfield of constants $\mc C$.
Then elements $f_1,\dots,f_n\in\mc K$ are linearly independent 
over any subfield of $\mc C$ if and only if their Wronskian is non-zero.
\end{lemma}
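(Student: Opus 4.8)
The plan is to prove both implications of the equivalence. For one direction, suppose the Wronskian $W(f_1,\dots,f_n)$ vanishes; I will show that $f_1,\dots,f_n$ are linearly dependent over $\mc C$ (hence over any subfield). This is the substantive direction. I would argue by induction on $n$. The case $n=1$ is clear: $W(f_1)=f_1$, so $W=0$ forces $f_1=0$. For the inductive step, assume the claim holds for fewer than $n$ functions. If $W(f_1,\dots,f_{n-1})=0$, then by induction $f_1,\dots,f_{n-1}$ are already $\mc C$-linearly dependent, hence so are $f_1,\dots,f_n$. So assume $W(f_1,\dots,f_{n-1})\neq 0$. Consider the $n\times n$ matrix whose rows are $\big(f_i,f_i',\dots,f_i^{(n-1)}\big)$ for $i=1,\dots,n$; since its determinant $W(f_1,\dots,f_n)$ vanishes, there is a nontrivial $\mc K$-linear relation among the rows, and since the top-left $(n-1)\times(n-1)$ minor $W(f_1,\dots,f_{n-1})$ is nonzero, the coefficient of the last row can be normalized to $1$. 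Thus there exist $g_1,\dots,g_{n-1}\in\mc K$ with
\begin{equation}\label{eq:wronski-rel}
f_n^{(j)}=g_1 f_1^{(j)}+\dots+g_{n-1}f_{n-1}^{(j)},\qquad j=0,1,\dots,n-1.
\end{equation}

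The key step is to show that the $g_i$ are constants. Differentiating the relation \eqref{eq:wronski-rel} for a fixed $j<n-1$ and subtracting the relation for $j+1$, I get $\sum_{i=1}^{n-1} g_i' f_i^{(j)}=0$ for $j=0,\dots,n-2$. These $n-1$ equations say that the vector $(g_1',\dots,g_{n-1}')$ lies in the kernel of the matrix with rows $\big(f_i,\dots,f_i^{(n-2)}\big)$, whose determinant is $W(f_1,\dots,f_{n-1})\neq 0$; hence $g_1'=\dots=g_{n-1}'=0$, i.e. all $g_i\in\mc C$. Then the $j=0$ case of \eqref{eq:wronski-rel} reads $f_n=\sum_{i=1}^{n-1} g_i f_i$ with $g_i\in\mc C$, which is the desired $\mc C$-linear dependence.

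For the converse direction, suppose $f_1,\dots,f_n$ are linearly dependent over some subfield $\mc C_0\subseteq\mc C$, say $\sum_i c_i f_i=0$ with $c_i\in\mc C_0$ not all zero. Since the $c_i$ are constants, applying $\partial^{(j)}$ gives $\sum_i c_i f_i^{(j)}=0$ for every $j\ge 0$; thus the nonzero vector $(c_1,\dots,c_n)$ lies in the kernel of the matrix defining the Wronskian, so $W(f_1,\dots,f_n)=0$. Finally, I should note that linear independence over $\mc C$ is equivalent to linear independence over any subfield $\mc C_0$ of $\mc C$ after one observes that the Wronskian criterion characterizes it intrinsically: concretely, the forward direction just proved shows $W=0 \Rightarrow$ dependence over $\mc C$ itself, so a $\mc C$-independent family has $W\neq 0$, and then the converse direction shows it is independent over every subfield; conversely independence over a subfield is a priori weaker, so these all coincide.

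The main obstacle is the argument that the coefficients $g_i$ in \eqref{eq:wronski-rel} are constants; everything else is linear algebra over $\mc K$ together with the definition of the derivation. As the statement cites \cite{PS03}, this is standard, so I expect the write-up to be short, with the inductive differentiation argument being the only point requiring care.
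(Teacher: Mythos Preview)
Your proof is correct and is precisely the standard argument one finds in the literature (e.g.\ in \cite{PS03}). Note, however, that the paper does not actually give its own proof of this lemma: it simply states the result with the attribution ``see e.g.\ \cite{PS03}'' and moves on. So there is nothing to compare against; your write-up supplies a proof where the paper only supplies a reference.

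One small cosmetic point: in the inductive step you speak of the rows $\big(f_i,f_i',\dots,f_i^{(n-1)}\big)$ and the ``top-left $(n-1)\times(n-1)$ minor''. With that row convention the Wronskian matrix has $f_i^{(j)}$ in row $i$ and column $j{+}1$, so the top-left minor is indeed $W(f_1,\dots,f_{n-1})$ as you use it; just be careful that this matches whatever convention for the Wronskian you fix at the start (rows vs.\ columns), since the other convention would put $W(f_1,\dots,f_{n-1})$ in the top-left as well but would make the phrase ``rows'' refer to derivative levels instead. The mathematics is unaffected either way.
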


\begin{lemma}\label{20120121cl1}
\begin{enumerate}[(a)]
\item
Let $\mc K$ be a differential field with field of constants $\mc C$,
and let $\mc D$ be an algebraic extension of $\mc C$.
Then $\mc D\otimes_{\mc C}\mc K$ is a differential field
with field of constants $\mc D$.
\item
Let $\mc K$ be a differential field with field of constants $\mc C$,
and let ${\mc L}$ be a differential field extension of $\mc K$ 
with field of constants $\bar{\mc C}$, the algebraic closure of $\mc C$.
Then, for every algebraic extension $\mc D$ of $\mc C$, 
the differential field $\mc D\otimes_{\mc C}\mc K$ is canonically isomorphic
to a differential subfield of ${\mc L}$.
\end{enumerate}
\end{lemma}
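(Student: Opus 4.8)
The plan is to prove (a) and then obtain (b) as a quick consequence. On the commutative $\mc C$-algebra $\mc D\otimes_{\mc C}\mc K$ put the derivation determined by $\partial(d\otimes a)=d\otimes\partial a$; this is well defined over $\mc C$ exactly because $\mc C=\ker\partial$, and the Leibniz rule is a routine check. What then has to be shown is that $\mc D\otimes_{\mc C}\mc K$ is a field and that its field of constants is $\mc D\otimes 1$.

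For the field property I would first reduce to a finite subextension. A nonzero $\xi\in\mc D\otimes_{\mc C}\mc K$ involves only finitely many elements of $\mc D$, hence lies in $\mc D_0\otimes_{\mc C}\mc K$ for some finite — and, in characteristic $0$, simple — subextension $\mc C\subseteq\mc D_0\subseteq\mc D$; since $\mc K$ is free over the field $\mc C$, the natural map $\mc D_0\otimes_{\mc C}\mc K\to\mc D\otimes_{\mc C}\mc K$ is injective, so it suffices to invert $\xi$ inside $\mc D_0\otimes_{\mc C}\mc K$. Writing $\mc D_0=\mc C[x]/(p)$ with $p$ monic irreducible over $\mc C$, we get $\mc D_0\otimes_{\mc C}\mc K\cong\mc K[x]/(p)$, which is a field precisely when $p$ stays irreducible over $\mc K$. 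This is the key point: if $p=gh$ in $\mc K[x]$ with $g,h$ monic of positive degree, then applying $\partial$ to the coefficients (with $\partial x=0$) and using $p\in\mc C[x]$ gives $(\partial g)h+g(\partial h)=0$; since $p$ is separable, $g$ and $h$ are coprime in the unique factorization domain $\mc K[x]$, so $g\mid\partial g$, and because $\deg(\partial g)<\deg g$ this forces $\partial g=0$, i.e. $g\in\mc C[x]$ — a proper monic factor of $p$ over $\mc C$, contradicting irreducibility.

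For the constants, fix a $\mc C$-basis $\{e_\alpha\}$ of $\mc D$, so every element of $\mc D\otimes_{\mc C}\mc K$ is uniquely $\sum_\alpha e_\alpha\otimes a_\alpha$; the derivation sends it to $\sum_\alpha e_\alpha\otimes\partial a_\alpha$, which vanishes iff each $a_\alpha\in\mc C$, i.e. iff the element lies in $\mc D\otimes 1\cong\mc D$. This proves (a). For (b), since $\mc D$ is algebraic over $\mc C$ and $\bar{\mc C}$ is the algebraic closure of $\mc C$, choose an embedding of $\mc D$ into $\bar{\mc C}$ over $\mc C$; composing with the inclusion of $\bar{\mc C}=\ker(\partial|_{\mc L})$ into $\mc L$ and with $\mc K\subseteq\mc L$, multiplication in $\mc L$ defines a homomorphism $\varphi\colon\mc D\otimes_{\mc C}\mc K\to\mc L$, $d\otimes a\mapsto da$. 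It commutes with $\partial$ because $\partial(da)=(\partial d)a+d(\partial a)=d\,\partial a$, $d$ being a constant of $\mc L$, and since by (a) its source is a field while $\varphi(1\otimes 1)=1\neq0$, $\varphi$ is injective. Thus $\mc D\otimes_{\mc C}\mc K$ is isomorphic, as a differential field, onto its image, namely the differential subfield of $\mc L$ generated by $\mc K$ and the chosen copy of $\mc D$.

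The only genuinely non-formal step I anticipate is the irreducibility assertion in (a) — that an irreducible polynomial over $\mc C=\ker\partial$ remains irreducible over $\mc K$ — which is exactly where characteristic $0$ (separability) and the constancy of the coefficients both come in; the remaining ingredients (reduction to a finite subextension, flatness of $\mc K$ over the field $\mc C$, uniqueness of coordinates in a tensor product, the product rule) are all straightforward, as is the deduction of (b).
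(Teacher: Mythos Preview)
Your proof is correct and follows essentially the same route as the paper: reduce to a simple finite subextension $\mc C[\alpha]\subset\mc D$, identify $\mc C[\alpha]\otimes_{\mc C}\mc K$ with $\mc K[x]/(P(x))$, and argue that the minimal polynomial $P$ of $\alpha$ over $\mc C$ remains irreducible over $\mc K$; part (b) is then the obvious multiplication map, injective because the source is a field.

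The only differences are cosmetic. For the irreducibility step the paper invokes its earlier Lemma~\ref{20120121:cl3} (the minimal polynomial of $\alpha$ over $\mc K$ already has coefficients in $\mc C$), whereas you argue directly on a hypothetical factorization $P=gh$ in $\mc K[x]$ and differentiate coefficient-wise; the underlying mechanism---applying $\partial$ to a polynomial identity with constant coefficients and using degree/minimality---is identical. You also supply the verification that the constants of $\mc D\otimes_{\mc C}\mc K$ are exactly $\mc D$, which the paper states but does not write out, and you make explicit the choice of an embedding $\mc D\hookrightarrow\bar{\mc C}\subset\mc L$ that the paper's word ``canonical'' glosses over.
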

\begin{proof}
For part (a) we need to prove that every non-zero element
$f=\sum_ic_i\otimes f_i\in\mc D\otimes_{\mc C}\mc K$
is invertible.
Let $\mc C[\alpha]$ be a finite extension of $\mc C$ in $\mc D$
containing all elements $c_1,\dots,c_n$,
and let $P(x)\in\mc C[x]$ be the minimal monic polynomial for $\alpha$ over $\mc C$.
By Lemma \ref{20120121:cl3}, $P(x)$ is an irreducible element of $\mc K[x]$.
Therefore $\mc K[x]/(P(x))$ is a field,
and $f\in\mc C[\alpha]\otimes_{\mc C}\mc K\simeq\mc K[x]/(P(x))$
is invertible.

Next, we prove part (b).
By the universal property of the tensor product,
there is a canonical map 
$\varphi:\,\mc D\otimes_{\mc C}\mc K\to\mc L$
given by $\varphi(c\otimes f)=cf$.
This is a differential field embedding by part (a).
\end{proof}

\begin{definition}
Let $\mc K$ be a differential field with subfield of constants $\mc C$. We 
know from Lemma \ref{20120121cl1}(a) that ${\bar{\mc C}} \otimes_{\mc C} {\mc K}$ is a d
differential field with subfield of constants $\bar{\mc C}$. We define the
linear closure of $\mc K$ to be the one of 
${\bar{\mc C}} \otimes_{\mc C} {\mc K}$.
\end{definition}

Recall that the \emph{differential Galois group} $Gal(\mc L/\mc K)$
of a differential field extension $\mc K\subset\mc L$
is defined as the group of automorphisms of $\mc L$ commuting with $\partial$
and fixing $\mc K$.
One of the main properties of Picard-Vessiot extensions
is the following
\begin{proposition}[\cite{PS03}]\label{20120123:prop1}
Let $\mc K$ be a differential field with algebraically closed subfield of constants $\mc C$,
and let $\mc L$ be a Picard-Vessiot extension of $\mc K$.
Then,
the set of fixed points of the differential Galois group $Gal(\mc L/\mc K)$ is $\mc K$.
\end{proposition}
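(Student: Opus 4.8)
The plan is to run the standard argument through the Picard--Vessiot ring, reducing the statement to the torsor theorem (which I would quote from \cite{PS03}); everything after that is formal linear algebra.

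First I would replace the field $\mc L$ by a ring. Fix a $\mc C$-basis $y_1,\dots,y_n$ of the solution space of $Ly=0$ in $\mc L$ and let $Y\in\mathrm{GL}_n(\mc L)$ be the associated Wronskian (fundamental) matrix, so $\partial Y=AY$ with $A$ the companion matrix of $L$; set $R=\mc K[Y_{ij},(\det Y)^{-1}]\subseteq\mc L$, the \emph{Picard--Vessiot ring}. I would invoke three standard facts (\cite{Mag94}, \cite{PS03}): $(a)$ $R$ is finitely generated as a $\mc K$-algebra and \emph{simple} as a differential ring (no nonzero proper $\partial$-stable ideal); $(b)$ $R$ has no new constants and $\mathrm{Frac}(R)=\mc L$; $(c)$ $G:=Gal(\mc L/\mc K)$ preserves $R$ — any $\sigma\in G$ sends solutions of $Ly=0$ to solutions, so $\sigma Y=Y\,c(\sigma)$ with $c(\sigma)\in\mathrm{GL}_n(\mc C)$ (recall $\mc L$ has no new constants, so $Y^{-1}\sigma Y$ is a constant matrix) — and thereby $G$ is the group of $\mc C$-points of a Zariski-closed subgroup of $\mathrm{GL}_n$ with coordinate ring $\mc C[G]$, carrying a coaction $\rho\colon R\to R\otimes_{\mc C}\mc C[G]$ that recovers $\sigma_g$ upon evaluating the $\mc C[G]$-factor at $g$.

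The key step is the \emph{torsor theorem}: $\mathrm{Spec}(R)$ is a torsor under $G$ over $\mc K$, equivalently the map
\[
\gamma\colon\ R\otimes_{\mc K}R\ \longrightarrow\ R\otimes_{\mc C}\mc C[G],\qquad r\otimes r'\mapsto(r\otimes1)\,\rho(r'),
\]
is an isomorphism of differential $R$-algebras, the target carrying the derivation trivial on $\mc C[G]$. I would justify it by the remark that, with the derivation $\partial\otimes1+1\otimes\partial$, the ring $R\otimes_{\mc K}R$ is the Picard--Vessiot ring over $R$ of the same system $\partial u=Au$, its constants being controlled by the two fundamental matrices $Y\otimes1$ and $1\otimes Y$ over $R$, and simplicity of $R$ forcing those constants to be exactly $\mc C[G]$. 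This is the step I expect to be the genuine obstacle: it is precisely where one uses that $G$ is ``large enough,'' and I would cite \cite{PS03} for it rather than reprove it.

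Granting the torsor theorem, the conclusion is routine. Let $a\in\mc L$ be fixed by every $\sigma\in G$ and write $a=p/q$ with $p,q\in R$, $q\neq0$; since $R\otimes_{\mc K}R$ is flat over $R$ via either factor, $q\otimes1$ and $1\otimes q$ are non-zero-divisors, so $a\otimes1$ and $1\otimes a$ make sense in the corresponding localization. Because $\mc C$ is algebraically closed, $G(\mc C)$ is Zariski-dense in $G$, so invariance of $a$ under all $\sigma_g$ is equivalent to $\rho(a)=a\otimes1$ for the extended coaction; hence $\gamma(1\otimes a)=\rho(a)=a\otimes1=\gamma(a\otimes1)$, and injectivity of $\gamma$ gives $1\otimes a=a\otimes1$. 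Clearing denominators, $p\otimes q=q\otimes p$ in $R\otimes_{\mc K}R$. As $R$ is a domain containing the field $\mc K$, it is $\mc K$-flat, so $W\otimes_{\mc K}W$ embeds in $R\otimes_{\mc K}R$ for $W=\mc Kp+\mc Kq$; there $p\otimes q=q\otimes p$ forces $\dim_{\mc K}W=1$, i.e. $p$ and $q$ are $\mc K$-proportional, whence $a\in\mc K$. Therefore $\mc L^G=\mc K$. (A scheme-free variant realizes $R$ as $\mc K[\mathrm{GL}_n]/\mf m$ for a maximal differential ideal $\mf m$, on which $\mathrm{GL}_n(\mc C)$ acts by right translation commuting with $\partial$ and $G=\{g:g\cdot\mf m=\mf m\}$, and argues that an extra $G$-invariant element would force $\mf m$ to be $\mathrm{GL}_n(\mc C)$-stable, contradicting $a\notin\mc K$; but this rests on the same density/torsor input.)
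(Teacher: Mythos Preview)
The paper does not prove this proposition: it is stated with the citation \cite{PS03} and invoked as a known result, with no argument supplied. Your proposal is precisely the standard proof one finds in \cite{PS03} (Picard--Vessiot ring, torsor isomorphism $R\otimes_{\mc K}R\simeq R\otimes_{\mc C}\mc C[G]$, then the $p\otimes q=q\otimes p$ trick), so it is entirely in line with what the paper is citing; there is nothing to compare because the paper offers no alternative route.

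One small remark on your write-up: the step ``$\mc C$ algebraically closed $\Rightarrow$ $G(\mc C)$ Zariski-dense in $G$'' is exactly where the hypothesis on $\mc C$ is used, and you are right to flag the torsor theorem as the nontrivial input you are quoting rather than proving. The argument as sketched is correct.
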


%%%%%%%%%%%%%%%%%%%%%%%%%%%%%%%%%%%%
\section{Minimal fractional decomposition}
\label{sec:app3}

Given a matrix $A\in M_n({\mathcal{K}}[\partial])$,
we denote by $\bar A$ the same matrix $A$ considered as an endomorphism
of $\bar{\mathcal{K}}^n$, where $\bar{\mathcal{K}}$ is the linear closure of 
$\mathcal{K}$.
We have the following possible conditions for a ``minimal'' fractional decomposition $H=AB^{-1} \in M_n({\mathcal{K}}(\partial))$, 
where $A, B \in M_n({\mathcal{K}}[\partial])$ and $B$ is non-degenerate:
\begin{enumerate}[$(i)$]
\item
$d(B)$ is minimal among all possible fractional decompositions of $H$;
\item
$A$ and $B$ are \emph{coprime}, i.e.
if $A=A_1D$ and $B=B_1D$, with $A_1,B_1,D\in M_n({\mathcal{K}}[\partial])$,
then $D$ is invertible in $M_n({\mathcal{K}}[\partial])$;
%(i.e. $d(D)=0$);
\item
$\ker\bar A\cap\ker\bar B=0$.
\end{enumerate}
Obviously, condition $(iii)$ implies:
\begin{enumerate}
\item[$(iii')$]
$\ker A\cap\ker B=0$.
\end{enumerate}
\begin{example}\label{20120119:cl9}
Condition $(iii')$ is weaker than condition $(iii)$.
Consider, for example, $A=\partial(\partial-1)$ and $B=\partial-1$.
We have $e^x\in\ker\bar A\cap\bar B$,
and $\ker A\cap\ker B=0$ unless the differential field 
$\mc K$
contains a solution to the equation $u'=u$.
\end{example}
\begin{remark}
Condition (iii) is equivalent to ask that $A$ and $B$ have no common 
eigenvector with eigenvalue $0$ over any differential field extension of 
$\mc K$.
\end{remark}
\begin{proposition}\label{20120126:prop1}
In the ``scalar'' case $n=1$, conditions $(i)$, $(ii)$ and $(iii)$ are equivalent.
\end{proposition}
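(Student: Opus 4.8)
The plan is to establish the cycle of implications $(i)\Rightarrow(ii)\Rightarrow(iii)\Rightarrow(i)$ in the scalar case $n=1$, where $M_1(\mc K[\partial])=\mc K[\partial]$, $d(B)$ is just the order of $B$, ``non-degenerate'' means ``non-zero'', and ``invertible'' means ``a non-zero element of $\mc K$''. All three implications should be short because we can use the Euclidean structure of $\mc K[\partial]$ recalled in Section~2 together with the facts about the linear closure $\bar{\mc K}$ from Section~5.

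For $(i)\Rightarrow(ii)$, suppose $H=AB^{-1}$ with $d(B)$ minimal, and write $A=A_1D$, $B=B_1D$ with $D\in\mc K[\partial]$. Then $H=A_1B_1^{-1}$ is another fractional decomposition, so $d(B_1)\geq d(B)=d(B_1)+d(D)$, forcing $d(D)=0$, i.e. $D\in\mc K\setminus\{0\}$; hence $A,B$ are coprime. For $(ii)\Rightarrow(iii)$, I argue by contrapositive: if $\ker\bar A\cap\ker\bar B\neq0$, pick a non-zero $f$ in this intersection lying in some Picard--Vessiot-type extension (one element suffices in the scalar case). Then $\partial-f'/f\in\bar{\mc K}[\partial]$ right-divides both $\bar A$ and $\bar B$. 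The point is to descend this common divisor to $\mc K[\partial]$: the greatest common right divisor $D$ of $A$ and $B$ computed in $\mc K[\partial]$ (which exists by the Euclidean property, as $A\mc K[\partial]+B\mc K[\partial]=D\mc K[\partial]$) remains a greatest common right divisor after base change to $\bar{\mc K}$, because Euclidean division is preserved under field extension; since $\partial-f'/f$ divides both $\bar A$ and $\bar B$ it divides $\bar D$, so $d(D)\geq1$ and $D\notin\mc K$, contradicting coprimality. For $(iii)\Rightarrow(i)$, suppose $H=A'B'^{-1}$ is any decomposition with $d(B')<d(B)$. Comparing $AB^{-1}=A'B'^{-1}$ and using the existence of a least right common multiple, we can write $AN=A'M$, $BN=B'M$ for suitable non-zero $M,N\in\mc K[\partial]$; then over $\bar{\mc K}$, since $\bar A$ and $\bar B$ have no common kernel vector and $d(\bar B)>d(\bar{B'})$, a dimension count on solution spaces of $\bar B u=0$ versus $\bar{B'}u=0$ (using that $\dim\ker\bar B=d(B)$ in a linearly closed field) yields a non-zero $g\in\ker\bar B$ with $\bar A g\neq 0$ but forced into $\ker\bar A$—a contradiction. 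Alternatively, and more cleanly, I would deduce $(iii)\Rightarrow(ii)\Rightarrow(i)$: $(iii)$ trivially implies coprimality (any common right divisor $D$ of positive order would have a kernel vector in $\bar{\mc K}$ lying in $\ker\bar A\cap\ker\bar B$), and coprimality implies minimality since if $H=A'B'^{-1}$ with $d(B')$ minimal, the relation between the two decompositions forces $B=B'D$, $A=A'D$ with $D\in\mc K[\partial]$, and coprimality of $A,B$ gives $d(D)=0$, whence $d(B)=d(B')$.

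Given the second route, the only implication requiring real work is $(i)\Rightarrow(iii)$ (or equivalently $(ii)\Rightarrow(iii)$): producing, from a common kernel vector of $\bar A$ and $\bar B$ in the linear closure, an honest common right divisor of $A$ and $B$ in $\mc K[\partial]$ of positive order. I expect this descent argument—showing the gcrd is stable under the base extension $\mc K\subset\bar{\mc K}$, and that a first-order right factor $\partial-f'/f$ over $\bar{\mc K}$ forces the gcrd to be non-scalar—to be the main obstacle, since it is the one place where the structure of $\bar{\mc K}$ (Section~5) genuinely enters rather than just the Euclidean algorithm. Everything else is bookkeeping with orders and least common multiples as recalled in Section~2.
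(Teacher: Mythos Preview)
Your proposal is correct and follows essentially the same route as the paper. The paper cites \cite{CDSK12} for $(i)\Leftrightarrow(ii)$, observes that $(iii)\Rightarrow(ii)$ because a non-invertible $D$ has a root in $\bar{\mc K}$, and proves $(ii)\Rightarrow(iii)$ exactly as you do: a non-zero $f\in\ker\bar A\cap\ker\bar B$ yields the right factor $\partial-f'/f$ over $\bar{\mc K}$, and since the Euclidean algorithm shows that the right greatest common divisor is independent of the differential field extension, this forces the gcrd in $\mc K[\partial]$ to be non-invertible.

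Two small remarks. First, your initial attempt at a direct $(iii)\Rightarrow(i)$ via a dimension count is muddled as written (the sentence produces a $g\in\ker\bar B$ with $\bar A g\neq 0$ ``but forced into $\ker\bar A$'' without saying what forces it), so you are right to discard it in favor of $(iii)\Rightarrow(ii)\Rightarrow(i)$. Second, in $(ii)\Rightarrow(i)$ you invoke that any fractional decomposition is obtained from a minimal one by right multiplication by some $D\in\mc K[\partial]$; that is precisely the scalar fact recalled in Section~2 from \cite{CDSK12}, so you are citing the same external input as the paper.
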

\begin{proof}
It follows from \cite{CDSK12} that conditions $(i)$ and $(ii)$ are equivalent.
Moreover, condition $(iii)$ implies condition $(ii)$ since,
if $D\in\mc K[\partial]$ is not invertible,
than it has some root in the linear closure $\bar {\mc K}$.
We are left to prove that condition $(ii)$ implies condition $(iii)$.
Note that, by the
Euclidean algorithm, the right greatest common divisor of $A$ and $B$
is independent of the differential field extension of $\mc K$.
Suppose, by contradiction, that 
$0\neq f\in\ker\bar A\cap\ker\bar B$,
which means that $A=A_1(\partial-\frac{f'}{f})$
and $B=B_1(\partial-\frac{f'}{f})$,
for some $A_1, B_1\in\bar{\mc K}[\partial]$,
so that the right greatest common divisor of $A$ and $B$ is not invertible,
contradicting assumption $(ii)$.
\end{proof}
\begin{theorem}
\label{th:6.4}
\begin{enumerate}[(a)]
\item
Every $H \in M_n({\mathcal{K}}(\partial))$ can be represented as $H=AB^{-1}$, with $B$ non-degenerate, such that $(iii)$ holds.
\item
Conditions $(i)$, $(ii)$, and $(iii)$ are equivalent. Any fraction which 
satisfies one of these equivalent conditions is called a minimal fractional 
decomposition.
\item
If ${A_0}{B_0}^{-1}$ is a minimal fractional decomposition of the fraction 
$H=AB^{-1}$, then one can find a matrix differential operator $D$ such that 
$A={A_0}D$ and $B={B_0}D$.
\end{enumerate}
\end{theorem}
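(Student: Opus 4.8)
The plan is to reduce all three parts to a single \emph{comparison lemma}: if $H=AB^{-1}$ is a decomposition with $\det B\neq0$ satisfying $(iii)$, and $H=A'B'^{-1}$ is \emph{any} decomposition with $\det B'\neq0$, then $A'=AD$ and $B'=BD$ for some $D\in M_n(\mc K[\partial])$. Granting this, part $(c)$ is exactly the lemma, and parts $(a)$--$(b)$ follow by bookkeeping; so the real content is the existence statement $(a)$ and the comparison lemma. I would prove $(a)$ first, since the comparison lemma invokes it.

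For $(a)$ I would start from an arbitrary right fractional decomposition $H=AB^{-1}$ with $\det B\neq0$ (these exist by the discussion in Section 4), apply Theorem \ref{20120119:cl1} to pass to one with $\ker A\cap\ker B=0$ over $\mc K$, and then repeatedly shrink $d(B)$ until $(iii)$ holds, using the following \emph{Key Lemma}: if $B$ is non-degenerate and $\ker\bar A\cap\ker\bar B\neq0$, then $A$ and $B$ have a common right divisor $D\in M_n(\mc K[\partial])$ with $d(D)\geq1$; writing $A=A_1D$, $B=B_1D$ one gets $H=A_1B_1^{-1}$ with $d(B_1)=d(B)-d(D)<d(B)$. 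Since $d(B)\in\mb Z_+$ always, finitely many applications terminate at a decomposition satisfying $\ker\bar A\cap\ker\bar B=0$. To prove the Key Lemma, pass to the linear closure $\bar{\mc K}$: a nonzero common eigenvector $f$ of $\bar A,\bar B$ with eigenvalue $0$ gives, after permuting coordinates so $f_k\neq0$, the matrix $D_0$ obtained from $\id_n$ by replacing the $k$-th column with $(-f_1/f_k,\dots,-f_{k-1}/f_k,\ \partial-f_k'/f_k,\ 0,\dots,0)^{\mathrm T}$, which is a common right divisor of $\bar A,\bar B$ with $d(D_0)=1$ (this is exactly the construction in the proof of Theorem \ref{20120119:cl1}). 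Hence the greatest common right divisor of $\bar A,\bar B$ over $\bar{\mc K}$ --- equivalently the generator of the right $\bar{\mc K}[\partial]$-module spanned by the rows of $\bar A$ and $\bar B$, which is free of rank $n$ because $\bar B$ is non-degenerate --- has degree $\geq1$. This module is stable under the differential Galois group of $\bar{\mc K}/\mc K$ since $A,B$ are defined over $\mc K$; using the Picard--Vessiot description of $\bar{\mc K}$ as a directed union of Galois extensions with fixed field $\mc K$ (Propositions \ref{prop:5.4}, \ref{20120123:prop1} and Lemma \ref{lem:5.9}), Galois descent of this free module produces a rank-$n$ free $\mc K[\partial]$-submodule of $\mc K[\partial]^n$ whose generator $D$ gives $A=A_1D$, $B=B_1D$ with $d(D)\geq1$ (the degree being unchanged under field extension). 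I expect this descent step to be the main obstacle.

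For the comparison lemma, suppose $H=AB^{-1}$ satisfies $(iii)$ and $H=A'B'^{-1}$ with $B'$ non-degenerate. By Proposition \ref{prop:4.2} there are non-degenerate $C,D$ with $BC=B'D$; then $AB^{-1}=A'B'^{-1}$ forces $AC=A'D$, so setting $E:=CD^{-1}\in M_n(\mc K(\partial))$ we get $A'=AE$ and $B'=BE$, both in $M_n(\mc K[\partial])$. It remains to show $E\in M_n(\mc K[\partial])$. Write $E=PQ^{-1}$ with $Q$ non-degenerate and $\ker\bar P\cap\ker\bar Q=0$, which exists by part $(a)$ already proved. From $AP=A'Q$ and $BP=B'Q$: were $Q$ non-invertible we would have $d(Q)\geq1$ by Corollary \ref{cor:3.2}, hence a nonzero $g\in\ker\bar Q$, and then $\bar A\bar Pg=\bar A'\bar Qg=0=\bar B\bar Pg$, so $\bar Pg\in\ker\bar A\cap\ker\bar B=0$, whence $g\in\ker\bar P\cap\ker\bar Q=0$ --- a contradiction. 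So $Q$ is invertible and $E=PQ^{-1}$ is a matrix differential operator.

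Finally, the bookkeeping for $(b)$. Condition $(iii)\Rightarrow(ii)$: a common right divisor $D$ of $A,B$ is automatically non-degenerate since $\det B=\det B_1\det D$, and if $d(D)\geq1$ then $0\neq\ker\bar D\subseteq\ker\bar A\cap\ker\bar B$ (using that over the linearly closed $\bar{\mc K}$ a non-degenerate operator of positive degree has nonzero kernel). For $(i)\Rightarrow(iii)$ and $(ii)\Rightarrow(iii)$: choose by $(a)$ a decomposition $A_0B_0^{-1}=H$ satisfying $(iii)$, and apply the comparison lemma to get $A=A_0E$, $B=B_0E$; if $A,B$ are coprime then $E$ is invertible, while if $d(B)$ is minimal then, since every decomposition has denominator of degree $\geq d(B_0)$ by the same lemma, $d(E)=d(B)-d(B_0)=0$, so $E$ is again invertible; in both cases $\ker\bar A\cap\ker\bar B=\bar E^{-1}(\ker\bar A_0\cap\ker\bar B_0)=0$. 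The same comparison argument, applied in both directions between two decompositions satisfying $(iii)$, shows the minimal decomposition is unique up to an invertible right factor.
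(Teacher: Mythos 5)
Your overall architecture (a Bezout/GCRD-style ``Key Lemma'' driving existence, plus a comparison lemma proved via Proposition \ref{prop:4.2} and kernels over the linear closure) is sound and genuinely different from the paper's, and the comparison lemma and the bookkeeping for (b)--(c) are correct as written (you should also record explicitly that the comparison lemma gives $(iii)\Rightarrow(i)$, since $d(B')=d(B)+d(E)\ge d(B)$ for every decomposition, which closes the equivalence). But there is one genuine gap, and it is exactly the step you flag yourself: in the Key Lemma you need a common right divisor $D$ of $A$ and $B$ \emph{with entries in} $\mc K[\partial]$ of positive degree, whereas your argument only produces one over the linear closure $\bar{\mc K}$ (the degree-one factor built from a common kernel vector), and the return to $\mc K$ is left as ``Galois descent of this free module.'' As stated this is not a proof: $\bar{\mc K}$ is a non-algebraic extension obtained as an iterated union of Picard--Vessiot extensions, a Galois-\emph{stable} submodule is not thereby pointwise fixed, and the paper's own descent (Proposition \ref{prop:6.6}) needs real work --- a canonical triangular normalization, uniqueness of the minimal decomposition over $\bar{\mc K}$ (Proposition \ref{prop:6.5}) to force invariance of the actual matrix entries under the differential Galois group, then Lemma \ref{lem:5.9} and Proposition \ref{20120123:prop1}, plus a separate argument when the constants are not algebraically closed. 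Your sketch would need a comparable uniqueness/normalization mechanism, so the deferred step is precisely where the content lies.

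The good news is that the descent is avoidable, and with the following fix your route becomes complete and shorter than the paper's. Take $D$ to be a generator of the left ideal of $M_n(\mc K[\partial])$ generated by $A$ and $B$; this ideal is principal (cf.\ \cite[Prop.~4.10]{MR01}, the same fact the paper invokes in the remark closing Section 6). Then $A=A_1D$, $B=B_1D$ and $D=PA+QB$ for some $P,Q\in M_n(\mc K[\partial])$; $D$ is non-degenerate because $\det B\neq 0$, and any $0\neq f\in\ker\bar A\cap\ker\bar B$ satisfies $\bar Df=\bar P\bar Af+\bar Q\bar Bf=0$, so $D$ is not invertible over $\bar{\mc K}[\partial]$, hence not over $\mc K[\partial]$, hence $d(D)\ge 1$ by Corollary \ref{cor:3.2}. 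This proves your Key Lemma entirely over $\mc K$, with no descent and no need for the auxiliary degree-one factor $D_0$; your termination argument for (a), the comparison lemma (which correctly uses that over the linearly closed $\bar{\mc K}$ a non-degenerate $Q$ with $d(Q)\ge 1$ has nonzero kernel, via Lemma \ref{lem:3.1}(b)), and parts (b)--(c) then go through as written. By contrast, the paper first proves the theorem over a linearly closed field by a delicate induction on $n$ (Proposition \ref{prop:6.5}) and then descends the minimal decomposition itself (Proposition \ref{prop:6.6}); your Bezout-based argument trades that induction and that descent for the principality of left ideals of $M_n(\mc K[\partial])$.
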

\begin{proposition}\label{prop:6.5}
Theorem \ref{th:6.4} holds if $\mc K$ is linearly closed. 
\end{proposition}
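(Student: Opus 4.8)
The plan is to reduce everything to two consequences of the hypothesis that $\mc K$ is linearly closed: since $\bar{\mc K}=\mc K$, conditions $(iii)$ and $(iii')$ coincide; and a non-degenerate $D\in M_n(\mc K[\partial])$ that is not invertible has $d(D)\geq 1$ by Corollary~\ref{cor:3.2}, hence $\ker D\neq0$, because over a linearly closed field the equation $D(\partial)u=0$ has a solution space of dimension $d(D)$. For part (a) I would start from the fractional decomposition $H=A_1(b\id_n)^{-1}$ obtained by clearing denominators (with $b\neq0$, so $b\id_n$ is non-degenerate) and apply Theorem~\ref{20120119:cl1} to the pair $(A_1,b\id_n)$: this gives $A_1=A'D$ and $b\id_n=B'D$ with $B',D$ non-degenerate and $\ker A'\cap\ker B'=0$, so $H=A'(B')^{-1}$ is a fractional decomposition satisfying $(iii')$, which here is $(iii)$.

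The heart of the proof is the following cancellation claim: if $H=AB^{-1}=A_0B_0^{-1}$ are two fractional decompositions and $\ker A_0\cap\ker B_0=0$, then $A=A_0D$ and $B=B_0D$ for some $D\in M_n(\mc K[\partial])$. I would prove it as follows. By Proposition~\ref{prop:4.2} applied to the non-degenerate pair $(B_0,B)$ there are non-degenerate $P,R$ with $B_0P=BR$; multiplying this identity on the left by $H$ and using $HB_0=A_0$ and $HB=A$ also gives $A_0P=AR$. Set $D:=PR^{-1}\in M_n(\mc K(\partial))$, so that $A_0D=A$ and $B_0D=B$ hold over the skewfield; it remains to check that $D$ is a matrix differential operator. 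Using part (a), write $D=NM^{-1}$ with $\ker N\cap\ker M=0$, so that $A_0N=AM$ and $B_0N=BM$. If $M$ were not invertible, then by linear closedness there would be a non-zero $v\in\ker M$; but then $A_0Nv=AMv=0$ and $B_0Nv=BMv=0$, so $Nv\in\ker A_0\cap\ker B_0=0$, forcing $v\in\ker N\cap\ker M=0$, a contradiction. Hence $M$ is invertible and $D=NM^{-1}\in M_n(\mc K[\partial])$.

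With the claim in hand I would close the implication cycle $(iii)\Rightarrow(i)\Rightarrow(ii)\Rightarrow(iii)$, which proves (b). For $(iii)\Rightarrow(i)$: given a decomposition $A_0B_0^{-1}$ satisfying $(iii)$ and any decomposition $AB^{-1}$, the claim yields $B=B_0D$ with $D$ non-degenerate, so $d(B)=d(B_0)+d(D)\geq d(B_0)$ by Corollary~\ref{cor:3.2}(a). For $(i)\Rightarrow(ii)$: if $A=A_1D$ and $B=B_1D$ with $D$ not invertible, then $D$ is non-degenerate (because $\det B\neq0$) with $d(D)\geq1$, so $A_1B_1^{-1}$ is a decomposition with $d(B_1)<d(B)$, contradicting $(i)$. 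For $(ii)\Rightarrow(iii)$: Theorem~\ref{20120119:cl1} applied to $(A,B)$ gives $A=A_1D$, $B=B_1D$ with $\ker A_1\cap\ker B_1=0$; by $(ii)$ the operator $D$ is invertible, hence acts bijectively on $\mc K^n$ and $\ker A\cap\ker B=D^{-1}(\ker A_1\cap\ker B_1)=0$, i.e. $(iii)$. Finally (c) is exactly the cancellation claim, now that ``minimal'' means ``satisfies $(iii)$''.

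I expect the only genuine difficulty to be the cancellation claim, and within it the verification that $D=PR^{-1}$ has polynomial entries: this is the single point where Proposition~\ref{prop:4.2}, Theorem~\ref{20120119:cl1}, and both uses of linear closedness must be combined. The remaining implications are routine manipulations with the Dieudonn\'e degree, and the case of general $\mc K$ should then be deduced from this linearly closed case.
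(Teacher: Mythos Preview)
Your proof is correct and takes a genuinely different route from the paper's.

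The paper establishes parts (a), the easy implications among (i)--(iii), and then proves the hard direction $(iii)\Rightarrow(i)$ together with (c) by \emph{induction on $n$}: it reduces to block upper-triangular form (with $B$ diagonal), invokes the inductive hypothesis on the $(n-1)\times(n-1)$ corner via an auxiliary lemma (Lemma~\ref{lem:alberto1}), and then runs a somewhat delicate scalar analysis on the bottom-right entry to produce the matrix $T$ with $C=AT$, $D=BT$.

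Your argument bypasses the induction entirely. The key observation is that once part (a) is available, the ``cancellation claim'' can be proved directly: use Proposition~\ref{prop:4.2} to align the denominators ($B_0P=BR$), deduce $A_0P=AR$, form $D=PR^{-1}\in M_n(\mc K(\partial))$, and then apply part (a) \emph{again} to $D$ itself, writing $D=NM^{-1}$ with $\ker N\cap\ker M=0$. The contradiction argument that forces $M$ invertible (if $0\neq v\in\ker M$ then $Nv\in\ker A_0\cap\ker B_0=0$, so $v\in\ker N\cap\ker M$) is clean and uses linear closedness exactly where needed. This is shorter and more conceptual than the paper's block decomposition; the price is that you invoke Proposition~\ref{prop:4.2}, which the paper proves but does not use in this spot. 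Everything else---the implications $(iii)\Rightarrow(i)\Rightarrow(ii)\Rightarrow(iii)$ and the identification of (c) with the claim---is handled the same way in both proofs.
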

\begin{lemma}\label{lem:alberto1}
Assuming that Theorem \ref{th:6.4}(c) holds,
let $K=A_1B_1^{-1}$ be a minimal fractional decomposition, with $A_1,B_1\in M_k(\mc K[\partial])$.
Let also $V\in\mc K[\partial]^k$ be such that $AB^{-1}V\in\mc V[\partial]^\ell$.
Then $V=BZ$ for some $Z\in\mc K[\partial]^k$.
\end{lemma}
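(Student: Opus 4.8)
The appearance of Theorem~\ref{th:6.4}(c) in the hypothesis suggests the following strategy: enlarge the column $V$ to a \emph{non-degenerate square} matrix differential operator to which (c) applies, and then read the conclusion off one of its columns. Write $H=A_1B_1^{-1}$ for the given minimal fractional decomposition, so that the hypothesis reads $HV=A_1B_1^{-1}V\in\mc K[\partial]^k$. The case $V=0$ being trivial (take $Z=0$), assume $V\neq0$.

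Since $B_1$ is non-degenerate, the column $U:=B_1^{-1}V\in\mc K(\partial)^k$ is well defined and non-zero, so I would fix an index $j$ with $U_j\neq0$ and let $M\in M_k(\mc K[\partial])$ be the matrix obtained from $B_1$ by replacing its $j$-th column by $V$. Then $M=B_1E$, where $E\in M_k(\mc K(\partial))$ is $\id_k$ with its $j$-th column replaced by $U$; subtracting from the $j$-th column of $E$ the $i$-th column multiplied on the right by $U_i$, for each $i\neq j$, turns $E$ into $\mathrm{diag}(1,\dots,1,U_j,1,\dots,1)$, which is non-degenerate because $U_j\neq0$, hence so are $E$ and $M=B_1E$. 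Next I would check that $HM\in M_k(\mc K[\partial])$: for $i\neq j$ the $i$-th column of $HM$ is $H(B_1e_i)=A_1B_1^{-1}B_1e_i=A_1e_i$, i.e.\ the $i$-th column of $A_1$, while the $j$-th column of $HM$ is $HV$, which lies in $\mc K[\partial]^k$ by hypothesis; in fact $HM$ is just $A_1$ with its $j$-th column replaced by $HV$.

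Now $(HM)M^{-1}=H$ exhibits $H$ as a right fractional decomposition with non-degenerate denominator $M$, while $A_1B_1^{-1}$ is a minimal one, so Theorem~\ref{th:6.4}(c) provides $Z_0\in M_k(\mc K[\partial])$ with $M=B_1Z_0$ (and $HM=A_1Z_0$). Comparing $j$-th columns in $M=B_1Z_0$ gives $V=B_1(Z_0e_j)$, so the lemma holds with $Z:=Z_0e_j\in\mc K[\partial]^k$. The delicate point is the construction of $M$: one must be sure that some column of $B_1$ can be traded for $V$ without destroying the Dieudonn\'e determinant, which is exactly why $j$ is chosen with $(B_1^{-1}V)_j\neq0$; everything else is routine. (If $V$ is a matrix rather than a single column, apply the argument to each of its columns. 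One can also bypass \ref{th:6.4}(c): minimality of $A_1B_1^{-1}$ forces $A_1,B_1$ to be coprime, hence a B\'ezout-type identity $PA_1+QB_1=\id_k$ holds, and then $B_1^{-1}V=P(HV)+QV\in\mc K[\partial]^k$ at once; but this identity requires a Hermite-type normal form for the $2k\times k$ matrix stacking $A_1$ over $B_1$, so the route through (c) is cleaner.)
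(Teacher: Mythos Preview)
Your proof is correct and follows the same strategy as the paper's: extend the single column $V$ to a non-degenerate square matrix by replacing one column of the denominator, observe that left-multiplying by $H$ stays in $M_k(\mc K[\partial])$, and then apply Theorem~\ref{th:6.4}(c) to read off $V=B_1Z$ from the replaced column. The only difference is cosmetic: the paper first uses Lemma~\ref{lem:3.1} to reduce $B$ to diagonal form (so that non-degeneracy of the modified matrix follows from some entry of $V$ being non-zero), whereas you avoid this reduction by choosing the column index $j$ so that $(B_1^{-1}V)_j\neq0$ and factoring $M=B_1E$ with $E$ visibly non-degenerate---a slightly slicker bookkeeping of the same idea.
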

\begin{proof}
After replacing, if necessary, $A$ by $AU_1$, $B$ by $U_2BU_1$, and $V$ by $U_2V$,
with $U_1$ and $U_2$ invertible elements of $M_n(\mc K[\partial])$,
we can assume by Lemma \ref{lem:3.1} that $B$ is diagonal.
If $V=0$ there is nothing to prove, so let the $i$-th entry of $V$ be non zero.
Consider the matrix $\widetilde{V}\in M_k(\mc K[\partial])$
be the same as $B$, with the $i$-th column replaced by $V$.
Clearly, $\widetilde{V}$ is non-degenerate.
By assumption $AB^{-1}\widetilde{V}=K$ lies in $M_n(\mc K[\partial])$,
so that $K\widetilde{V}^{-1}$ is another fractional decomposition for $H=AB^{-1}$.
Hence, by Thorem \ref{th:6.4}(c), we have that $\widetilde{V}=B\widetilde{Z}$
for some $\widetilde{Z}\in M_n(\mc K[\partial])$,
so that $V=BZ$, where $Z$ is the $i$-th column of $\widetilde{Z}$.
\end{proof}

\begin{proof}[Proof of Proposition \ref{prop:6.5}]
Part $(a)$ holds by Theorem \ref{20120119:cl1}. 
In part $(b)$, 
condition $(iii)$ implies condition $(ii)$ since, by assumption, $\mc K$ is linearly closed.
Conversely, let $A,B\in M_n(\mc K[\partial])$ satisfy condition $(ii)$.
By Theorem \ref{20120119:cl1} we have $A=A_1D, B=B_1D$
with $\ker A_1\cap\ker B_1=0$,
and by assumption $(ii)$, $D\in M_n(\mc K[\partial])$ is invertible.
Hence,  $\ker A\cap\ker B=0$, proving $(iii)$.
Furthermore, it is clear that condition $(i)$ implies condition $(iii)$. 
Indeed if $Ker A \cap Ker B \neq 0$, then by 
Theorem \ref{20120119:cl1} one can find $C,D,E$ such that $A=CE$, $B=DE$, 
$Ker C \cap Ker D =0$ and $d(E)>0$. 
Then $AB^{-1}=CD^{-1}$ and $d(D)<d(B)$, contradicting assumption $(i)$.
To conclude, we are going to prove, by induction on $n$, that condition $(iii)$ implies condition $(i)$,
and that part $(c)$ holds.

If $n=1$ the statement holds by Proposition \ref{20120126:prop1} and the results in \cite{CDSK12}.
Let then $n>1$ and $A,B\in M_n(\mc K[\partial])$, with $B$ non degenerate, 
be such that condition $(iii)$ holds: $\ker A\cap\ker B=0$.
Let also $CD^{-1}=AB^{-1}$ be any other fractional decomposition of $H=AB^{-1}$, 
with $C,D\in M_n(\mc K[\partial])$, $D$ non degenerate.
We need to prove that there exists $T\in M_n(\mc K[\partial])$ such that
$C=AT$ and $D=BT$.
(In this case, $d(D)=d(B)+d(T)\geq d(B)$, proving condition $(i)$).

Firts, note that, if $U_i$, $i=1,\dots,4$, are invertible elements of $M_n(\mc K[\partial])$,
then $\ker(U_1AU_3)\cap\ker(U_2BU_3)=0$, 
and we have $(U_1AU_3)(U_2BU_3)^{-1}=(U_1CU_4)(U_2DU_4)^{-1}$.
Hence, by Lemma \ref{lem:3.1} we can assume, without loss of generality,
that $B$ is diagonal, $A,D$ are upper triangular,
and hence $C=AB^{-1}D$ is upper triangula as well.
Let then
$$
\begin{array}{l}
\displaystyle{
A=\left(
\begin{array}{ccc|c}
 & & &        \\
A_1& & &U \\
 & & & \\ \hline
0& & & a\\
\end{array}\right)
\,\,,\,\,\,\,
B=\left(
\begin{array}{ccc|c}
 & & &        \\
B_1& & &0 \\
 & & & \\ \hline
0& & & b\\
\end{array}\right)
\,,}\\
\displaystyle{
C=\left(
\begin{array}{ccc|c}
 & & &        \\
C_1& & &V \\
 & & & \\ \hline
0& & & c\\
\end{array}\right)
\,\,,\,\,\,\,
D=\left(
\begin{array}{ccc|c}
 & & &        \\
D_1& & &W \\
 & & & \\ \hline
0& & & d\\
\end{array}\right)
\,,}
\end{array}
$$
where $B_1$ is diagonal and $A_1,C_1,D_1$ are upper triangular $n-1\times n-1$ matrices  
with entries in $\mc K[\partial]$, 
with $B_1$ and $D_1$ non degenerate,
$U,V,W$ lie in $\mc K[\partial]^{n-1}$,
and $a,b,c,d$ lie in $\mc K[\partial]$, with $b,d\neq0$.
By assumption $AB^{-1}=CD^{-1}$, meaning that
\begin{equation}\label{eq:alb1}
A_1B_1^{-1}=C_1D_1^{-1}
\,\,,\,\,\,\,
ab^{-1}=cd^{-1}
\,\,,\,\,\,\,
Ub^{-1}=-C_1D_1^{-1}Wd^{-1}+Vd^{-1}\,.
\end{equation}
Moreover, the assumption $\ker A\cap\ker B=0$
clearly implies that $\ker A_1\cap\ker B_1=0$
(if $X\in \mc K^{n-1}$ is such that $A_1(X)=B_1(X)=0$, 
then $\tilde X=\left(\begin{array}{c}X\\0\end{array}\right)\in\mc K^n$ lies in $\ker A\cap\ker B$).
Hence, by the first identity in \eqref{eq:alb1} and
the inductive assumption, 
there exists $T_1\in M_{n-1}(\mc K[\partial])$ such that 
\begin{equation}\label{eq:t1}
C_1=A_1T_1
\,\,,\,\,\,\,
D_1=B_1T_1\,.
\end{equation}
The main problem is that we do not know that $\ker a\cap\ker b=0$
(it is false in general), hence we cannot conclude, yet, that $c=at$ and $d=bt$
for some $t\in\mc K[\partial]$.
Let then $ef^{-1}$ be a minimal fractional decomposition of $ab^{-1}=cd^{-1}$.
By the $n=1$ case we know that there exist $p,q\in\mc K[\partial]$
such that
\begin{equation}\label{eq:t2}
a=ep
\,\,,\,\,\,\,
b=fp
\,\,,\,\,\,\,
c=eq
\,\,,\,\,\,\,
d=fq\,,
\end{equation}
and let $k\in\mc K[\partial]$ be a right greatest common divisor of $p$ ad $q$,
i.e. there exist $s,t,i,j\in\mc K[\partial]$ such that
\begin{equation}\label{eq:t3}
p=ks
\,\,,\,\,\,\,
q=kt
\,\,,\,\,\,\,
si+tj=1\,.
\end{equation}
Eventually we will want to prove that we can choose
$k=p$ (i.e. $s=1$, $i=1$ and $j=0$).
Using the identities \eqref{eq:t1}, \eqref{eq:t2} and \eqref{eq:t3},
we can rewrite the third equation in \eqref{eq:alb1} as follows
$$
Us^{-1}=-A_1B_1^{-1}Wt^{-1}+Vt^{-1}\,,
$$
and multiplying each side of the above equation
by each side of the identity $1-si=tj$, we get
\begin{equation}\label{eq:alb2}
U+A_1B_1^{-1}Wjs=(Ui+Vj)s\,.
\end{equation}
Since $A_1B_1^{-1}$ is a minimal fractional decomposition,
we get, by the inductive assumption and Lemma \ref{lem:alberto1},
that there exists $Z\in\mc K[\partial]^{n-1}$ such that
\begin{equation}\label{eq:alb3}
Wjs=B_1Z
\,\,,\,\,\,\,
U+A_1Z=(Ui+Vj)s\,.
\end{equation}
%
%We want to prove that we can choose $s=1$ (up to a scalar) i.e.,
%since $\mc K$ is linearly closed, there $\ker s=0$.
Let $x\in\mc K$ be such that $s(x)=0$.
Letting $X=\left(\begin{array}{c}Z(x) \\ x\end{array}\right)\in\mc K^n$,
we get
$$
\begin{array}{l}
\displaystyle{
A(X)
=\left(\begin{array}{c} A_1Z(x)+U(x) \\ a(x) \end{array}\right)
=\left(\begin{array}{c} (Ui+Vj)s(x) \\ eks(x) \end{array}\right)=0
\,,}\\
\displaystyle{
B(X)
=\left(\begin{array}{c} B_1Z(x) \\ b(x) \end{array}\right)
=\left(\begin{array}{c} Wjs(x) \\ fks(x) \end{array}\right)=0\,.
}
\end{array}
$$
Hence, since by assumption $\ker A\cap\ker B=0$,
it follows that $\ker s=0$.
Namely, since $\mc K$ is linearly closed, $s$ is a scalar, that we can shoose to be $1$.
In conclusion, we get, as we wanted, that $k=p$ and $q=pt$,
so that, by \eqref{eq:t2},
\begin{equation}\label{eq:t4}
c=at\,\,,\,\,\,\,d=bt\,.
\end{equation}

Going back to the third equation in \eqref{eq:alb1}, we then get
\begin{equation}\label{eq:alb4}
Ut=-A_1B_1^{-1}W+V\,.
\end{equation}
Again, by the inductive assumption on the minimality of $A_1B_1^{-1}$
and Lemma \ref{lem:alberto1},
it follows that there exists $Z\in\mc K[\partial]^{n-1}$ such that
\begin{equation}\label{eq:alb5}
W=B_1Z
\,\,,\,\,\,\,
V=Ut+A_1Z\,.
\end{equation}
Hence, letting
$$
T=\left(
\begin{array}{ccc|c}
 & & &        \\
T_1& & &Z \\
 & & & \\ \hline
0& & & t\\
\end{array}\right)\,\in M_n(\mc K[\partial])\,,
$$
we get that $C=AT$ and $D=BT$, completing the proof.
\end{proof}
\begin{proposition}\label{prop:6.6}
Part $(a)$ of Theorem \ref{th:6.4} holds, namely if $A$ and $B$ are two 
$n\times n$ matrix differential operators with $B$ non-degenerate,
then we can find $n \times n$ matrix differential operators $C$, $D$ and $E$, 
such that $A=CE$, $B=DE$ and $Ker \bar{C} \cap Ker \bar{D}=0$. 
\end{proposition}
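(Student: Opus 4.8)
The statement is that Theorem \ref{th:6.4}(a) holds over an arbitrary differential field $\mathcal{K}$ (with algebraically closed constants, so that the linear closure $\bar{\mathcal{K}}$ is defined): given $A,B\in M_n(\mathcal{K}[\partial])$ with $B$ non-degenerate, one can factor $A=CE$, $B=DE$ with $\ker\bar C\cap\ker\bar D=0$. The natural strategy is to pass to the linear closure, apply the already-established linearly-closed case, and then \emph{descend} the factorization back to $\mathcal{K}$. First I would invoke Theorem \ref{20120119:cl1} to get a factorization $A=A_1E_0$, $B=B_1E_0$ over $\mathcal{K}$ with $\ker A_1\cap\ker B_1=0$ \emph{inside} $\mathcal{K}^n$; this is condition $(iii')$, but we want the stronger $(iii)$ over $\bar{\mathcal{K}}$. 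So the real work is to show that if $A,B$ already satisfy $(iii')$ over $\mathcal{K}$ but $\ker\bar A\cap\ker\bar B\neq 0$, we can strip off a further common right factor defined over $\mathcal{K}$ and reduce $d(B)$, then iterate (the iteration terminates since $d(B)\in\mathbb{Z}_+$ strictly decreases).

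**Key steps.** Suppose $0\neq F=(f_i)\in\ker\bar A\cap\ker\bar B$ with entries in $\bar{\mathcal{K}}$. By Lemma \ref{lem:5.9}, all the $f_i$ lie in a single Picard–Vessiot extension $\mathcal{K}_i\subset\mathcal{P}\subset\mathcal{K}_{i+1}$, with at least one $f_i\notin\mathcal{K}_i$; but more usefully, $\bar{\mathcal{K}}/\mathcal{K}$ carries the differential Galois action, and by Proposition \ref{20120123:prop1} the fixed field of $\mathrm{Gal}(\mathcal{P}/\mathcal{K})$ (after enlarging constants and working inside the appropriate Picard–Vessiot tower) is $\mathcal{K}$ itself. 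The idea is then: the set of $F\in\bar{\mathcal{K}}^n$ with $\bar AF=\bar BF=0$ is a finite-dimensional $\bar{\mathcal{C}}$-vector space $S$, stable under the Galois group $G=\mathrm{Gal}$; as in Theorem \ref{20120119:cl1}, each such $F$ produces a first-order right factor $\partial-\tfrac{f_k'}{f_k}$ common to $A$ and $B$, and by that theorem's construction applied over $\bar{\mathcal{K}}$ we extract a common right divisor $\bar D$ of $\bar A,\bar B$ with $d(\bar D)>0$. The point to establish is that this $\bar D$ can be chosen $G$-invariant, hence with coefficients in the fixed field $\mathcal{K}$. For this I would take $\bar D$ to be (a matrix representative of) the right gcd of $A$ and $B$ computed over $\bar{\mathcal{K}}[\partial]$ — or rather the ``minimal common right factor'' in the sense of the kernel: since $S$ is $G$-stable, and the greatest common right divisor obtained from $S$ via the Theorem \ref{20120119:cl1} construction is canonical (it only depends on $S$, not on choices, up to left multiplication by an invertible matrix), its image under any $\sigma\in G$ differs from it by a left unit, so one can normalize (e.g. upper-triangularize over $\mathcal{K}$ as in Lemma \ref{lem:3.1} first) to make it literally $G$-fixed; then its entries lie in $\mathcal{K}$.

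**Main obstacle.** The delicate point is the descent: ensuring the common right divisor extracted over $\bar{\mathcal{K}}$ is Galois-invariant and therefore defined over $\mathcal{K}$. Two subtleties must be handled. First, the linear closure is built via a tower of Picard–Vessiot composita, not a single PV extension, and $F$ may involve elements from various levels; Lemma \ref{lem:5.9} confines the relevant data to one PV extension $\mathcal{P}$ of some $\mathcal{K}_i$, but I actually want invariance under $\mathrm{Gal}(\cdot/\mathcal{K})$, so I would argue instead that it suffices to descend one level at a time — replace $\mathcal{K}$ by $\mathcal{K}_i$ is not allowed, so one must instead observe that the \emph{whole} kernel space $S$, and the gcd it determines, depend only on $A,B$, hence are fixed by every automorphism of $\bar{\mathcal{K}}$ over $\mathcal{K}$; uniqueness (up to left units) of the construction in Theorem \ref{20120119:cl1} then forces $\sigma(\bar D)=\bar D$ after the normalization. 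Second, one must make sure $E=\bar D$ lands in $M_n(\mathcal{K}[\partial])$ \emph{and} that $C=AE^{-1}$, $D=BE^{-1}$ are again in $M_n(\mathcal{K}[\partial])$ — the latter because $E$ is a common right divisor of $A,B$ over $\bar{\mathcal{K}}[\partial]$ whose coefficients are in $\mathcal{K}$, so the quotients $AE^{-1}, BE^{-1}$ have coefficients in $\bar{\mathcal{K}}\cap\mathcal{K}(\partial)$-type considerations; cleanly, right division in $M_n(\mathcal{K}[\partial])$ by a monic-normalized $E$ with $\mathcal{K}$-coefficients stays in $M_n(\mathcal{K}[\partial])$ since the division algorithm over $\mathcal{K}$ gives the same quotient as over $\bar{\mathcal{K}}$. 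After one such reduction $d(B)$ drops by $d(E)>0$; iterating and stopping when $\ker\bar C\cap\ker\bar D=0$ completes the proof.
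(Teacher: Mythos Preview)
Your Galois-descent instinct is correct, and the paper follows exactly that line, but its execution differs from your iterative plan in a way that sidesteps the obstacles you yourself flag.

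\medskip

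The paper does \emph{not} strip off one common right factor at a time. Instead it passes to $\bar{\mc K}$ once and for all and takes a \emph{full} minimal fractional decomposition $CD^{-1}$ of $AB^{-1}$ there; this exists by Proposition~\ref{prop:6.5}, and crucially the uniqueness part of that Proposition (part~(c) of Theorem~\ref{th:6.4} in the linearly closed case) is already available. The point is that for any differential automorphism $\phi$ fixing $\mc K$, the pair $(\phi(C),\phi(D))$ is another minimal decomposition of the \emph{same} $AB^{-1}$ (since $A,B$ have coefficients in $\mc K$ and $d(\phi(D))=d(D)$), so $\phi(C)=CE$, $\phi(D)=DE$ for some invertible $E\in M_n(\bar{\mc K}[\partial])$. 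To force $E=\id$, the paper fixes a concrete normalization of $D$: upper triangular, monic diagonal entries, and $d(D_{ij})<d(D_{ii})$ for $i<j$. An invertible upper-triangular $E$ preserving this normalization must be $\id$. Your phrase ``normalize (e.g.\ upper-triangularize over $\mc K$ as in Lemma~\ref{lem:3.1} first)'' does not give such rigidity: upper-triangularity alone leaves a large unit ambiguity, and the normalization must be imposed on $D$ over $\bar{\mc K}$, not on $A,B$ over $\mc K$.

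\medskip

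On the tower issue, the paper's device is exactly the one you almost reach but then abandon: take the \emph{least} index $i$ such that the (finitely many) coefficients of the normalized $C,D$ lie in $\mc K_i$; if $i\geq 1$, Lemma~\ref{lem:5.9} places them in a single Picard--Vessiot extension $\mc P$ of $\mc K_{i-1}$, the argument above shows $C,D$ are fixed by $\mathrm{Gal}(\mc P/\mc K_{i-1})$, and Proposition~\ref{20120123:prop1} forces the coefficients into $\mc K_{i-1}$, contradicting minimality. Thus $i=0$. This is a one-step-down argument applied at the top of the tower, not a global Galois correspondence for $\bar{\mc K}/\mc K$ (which, as you note, is not a single PV extension). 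Your sentence ``the whole kernel space $S$, and the gcd it determines, \dots are fixed by every automorphism of $\bar{\mc K}$ over $\mc K$'' is true but does not by itself yield descent to $\mc K$, because you have no statement that the fixed field of that automorphism group equals $\mc K$.

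\medskip

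In short: your iterative extraction of first-order factors from $\ker\bar A\cap\ker\bar B$ is not needed and makes the descent harder to control; the paper instead leverages Proposition~\ref{prop:6.5} to get a single canonical object (the minimal decomposition, rigidified by an explicit normal form for $D$) and descends \emph{that} through the PV tower level by level. The missing ingredients in your sketch are precisely (i) invoking the already-proved uniqueness in Proposition~\ref{prop:6.5}, and (ii) the specific normalization of $D$ that kills the unit ambiguity.
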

\begin{proof}
First, assume that the subfield of constants of $\mc K$ is algebraically 
closed. By Lemma \ref{lem:3.1}, we may assume that is $A$ upper triangular and 
$B$ is diagonal. Consider a minimal fractional decomposition $CD^{-1}$ of the 
fraction $AB^{-1}$ in the linear closure of $\mc K$. By Lemma \ref{lem:3.1}(a),
we can choose $C$ and $D$ to be upper triangular matrix differential operators.
We may assume that all the diagonal 
entries of $D$ are monic and, using elementary column transformations,
that $d(D_{ij})<d(D_{ii})$ for all $i<j$. 
Since the linear closure is the union of the iterate Picard-Vessiot 
compositum of $\mc K$, all the coefficients of the entries of $C$ and of $D$ 
lie in some iterate Picard-Vessiot compositum of $\mc K$. Take $i$ minimal 
such that ${\mc K}_i$ satisfies this property. Assume $i \neq 0$. 
By Lemma \ref{lem:5.9}, all the coefficients of the entries of $C$ and $D$ lie in 
some Picard-Vessiot subextension ${\mc K}_{i-1} \subset \mc P \subset {\mc K}_i$. Pick an automorphism $\phi$ of this extension. By Theorem \ref{th:6.4}, 
the fractional decomposition ${\phi(C)}{\phi(D)}^{-1}=CD^{-1}$ is still a 
minimal fractional decomposition because $d({\phi(D)})=d(D)$. So $C$ 
(resp $D$) and $\phi(C)$ (resp $\phi(D)$) are equal up to right 
multiplication by an invertible upper triangular matrix differential operator 
$E$. As all the diagonal entries of $\phi(D)$ are monic and 
$deg({\phi(D)}_{ij})<deg({\phi(D)}_{ii})$ for all $i<j$, $E$ has to be the 
identity matrix. Hence $C=\phi(C)$ and $D=\phi(D)$ for all $\phi$. 
It follows, by Proposition \ref{20120123:prop1},
that all the coefficients of the entries of $C$ and $D$ actually lie in 
${\mc K}_{i-1}$, which is a contradiction. 
So $i=0$ and all the coefficients of $C$ and $D$ are differential operators 
over $\mc K$. 

In the general case, one can find $C$, $D$ and $E$ satisfying the assumptions
of the proposition, whose entries are differential operators a priori over ${\mathcal{K}}\otimes_{\mc C } \bar{\mathcal{C}}$. So all the coefficients of the 
entries of $C$ and $D$ lie in a Galois extension $\mc K \subset \mc G$. 
As the extension of the derivation to an algebraic extension is unique, 
all automorphisms commute with the derivation. Hence, using the same argument 
as above with the usual Galois theory, we obtain that the entries of $C$ and 
$D$, hence those of $E$, are actually differential operators over $\mc K$.
\end{proof}

\begin{proof}[Proof of Theorem \ref{th:6.4}] By Proposition \ref{prop:6.6},
condition $(i)$ implies condition $(iii)$. Let $AB^{-1}$ be a 
fractional decomposition, satisfying $(iii)$. Then by Proposition \ref{prop:6.5} 
it satisfies $(i)$ as a fraction of matrix differential 
operators over the 
linear closure of $\mc K$, hence a fortiori over $\mc K$. 
The implication $(iii) \Rightarrow (ii)$ is clear by definition of a linearly 
closed field and $(ii) \Rightarrow (iii)$ follows from Proposition  \ref{prop:6.6}. 
Hence part $(b)$ of the theorem holds. If $A_{0}B_{0}^{-1}$ i
s a minimal 
fractional decomposition of the fraction $AB^{-1}$, then there is a matrix 
differential operator $D$ over the linear closure of $\mc K$ such that $
A=A_{0}D$ and $B=B_{0}D$. 
Since $B$ is non-degenerate, $D={B_0}^{-1}B$ is actually a matrix 
differential operator over $\mc K$.
\end{proof}

\begin{remark}
We have the following two more equivalent definitions for a minimal fracitonal decomposition
$H=AB^{-1}$, with $A,B\in M_n(\mc K[\partial])$:
\begin{enumerate}
\item[$(iv)$]
the ``Bezout identity'' holds: $CA+DB=I$ for some $C,D\in M_n(\mc K[\partial])$,
\item[$(v)$]
$A$ and $B$ have kernels intersecting trivially over any differential field extension of $\mc K$.
\end{enumerate}
Condition $(v)$ obviously implies condition $(iii)$,
and, also, condition $(iv)$ implies condition $(v)$ since the identity matrix has zero kernel
over any field extension of $\mc K$.
To prove that $(iii)$ implies $(iv)$,
we use the fact that any left ideal of $M_n(\mc K[\partial])$ is principal (cf. \cite[Prop.4.10, p.82]{MR01}).
But if $E\in M_n(\mc K[\partial])$ is a generator of the left ideal generated by $A$ and $B$,
then by condition $(iii)$ we have that $\ker(\bar E)=0$,
and therefore $E$ must be invertible.
\end{remark}

%%%
\section{Maximal isotropicity of $\mc L_{A,B}$}
\label{sec:app4}

Let $A,B\in M_{n}({\mc K}[\partial])$ with $\det B\neq0$.
Recall that the subspace
$\mc L_{A,B}\subset\mc K^{n}\oplus\mc K^n$ (defined in the Introduction) 
is isotropic if and only if $AB^{-1}\in M_{n}({\mc K}(\partial))$
is skewadjoint, which in turn is equivalent to the following condition (\cite{DSK12},Proposition $6.5$):
\begin{equation}\label{20120119:eq1}
A^*B+B^*A=0\,.
\end{equation}
Hence, $\mc L_{A,B}\subset\mc K^n\oplus\mc K^n$
is maximal isotropic if and only if \eqref{20120119:eq1} and the following condition hold:
\begin{enumerate}
\item[$(vi)$]
if $G,H\in\mc K^n$ are such that $A^*H+B^*G=0$,
then there exists $F\in\mc K^n$ such that $G=AF$ and $H=BF$.
\end{enumerate}
\begin{theorem}\label{20120119:tim}
Suppose that $A,B\in M_n (\mc K[\partial])$ with $\det B\neq0$ 
satisfy equation \eqref{20120119:eq1}.
If $AB^{-1}$ is a minimal fractional decomposition,
then $\mc L_{A,B}\subset{\mc K}^{n}\oplus\mc K^n$ 
is a maximal isotropic subspace.
Namely, condition $(iii)$ of Section $6$ implies condition $(vi)$.
\end{theorem}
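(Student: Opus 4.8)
The plan is to deduce condition $(vi)$ from condition $(iii)$ by exploiting the symmetry between $A, B$ and their adjoints that is forced by the skewadjointness relation \eqref{20120119:eq1}, and then invoking the uniqueness part of Theorem \ref{th:6.4} (part (c)) together with Remark \ref{rem:3.3}. First I would observe that equation \eqref{20120119:eq1} can be rewritten as $A^*B = -B^*A$, and that from $H = AB^{-1}$ being skewadjoint we get $H^* = -H$, i.e. $(B^*)^{-1}A^* = -AB^{-1}$, so $A^*(B^*)^{-1} \cdot (\text{sign})$ relates to a fractional decomposition of $-H$ on the left, or equivalently $(A^*)(B^*)^{-1}$, suitably interpreted, is a \emph{left} fractional decomposition of $-H^* = H$. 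The key point I would establish is that $A^*{B^*}^{-1}$ (as a left fraction, or its transpose-flavored analogue) is again \emph{minimal}: since $d(A^*) = d(A)$ and $d(B^*) = d(B)$ by Remark \ref{rem:3.3}, and minimality is characterized by the degree of the denominator being smallest (condition $(i)$), taking adjoints preserves minimality. This gives the left-handed analogue of condition $(iii)$ for the pair $(A^*, B^*)$.

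Next, given $G, H \in \mc K^n$ with $A^*H + B^*G = 0$, I want to produce $F$ with $G = AF$, $H = BF$. The natural move is to consider the rational matrix pseudodifferential operator $AB^{-1}$ applied appropriately: from $A^*H = -B^*G$ we get $H = -(A^*)^{-1}B^*G$ over $\mc K(\partial)$, and using skewadjointness $(A^*)^{-1}B^* = -(B^{-1}A)^* {}^{\,-1}\cdots$ — more cleanly, $A^*H + B^*G = 0$ means the column vector $G \oplus H$ is annihilated by the $n \times 2n$ operator $(B^* \;\; A^*)$, i.e. lies in the kernel of the adjoint of $\binom{B}{A}$. I would then apply the left-sided versions of Theorem \ref{20120119:cl1} and Lemma \ref{lem:alberto1} — which hold by the symmetric ``left'' statements proved throughout the paper (Lemma \ref{lem:3.1} has a $TU$ form, Proposition \ref{prop:4.2} has a $CA = DB$ form) — to the minimal \emph{left} decomposition coming from $(A^*, B^*)$. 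Concretely: $B^*G = -A^*H$ shows $A^*H \in \im(B^*) \cap \im(A^*)$ in a way that, by the left analogue of Lemma \ref{lem:alberto1} applied to the minimal decomposition, forces $H = BF$ for some $F \in \mc K^n$; substituting back, $B^*G = -A^*BF = B^*AF$ using \eqref{20120119:eq1}, and since $B^*$ is non-degenerate (so injective on $\mc K^n$ as a differential operator — here one uses that $\ker B^* = 0$, which follows because $d(B^*) = d(B)$ and we may pass to a minimal situation or use that $B$ non-degenerate with the left-coprimeness), we conclude $G = AF$.

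The main obstacle I anticipate is making the passage ``$A^*H \in \im(B^*)$ on $\mc K^n$ implies $H \in \im(B)$'' rigorous: this is exactly the content one needs Lemma \ref{lem:alberto1} for, but that lemma is stated for the \emph{right} minimal decomposition and for the condition ``$AB^{-1}V \in M_n(\mc K[\partial])$ implies $V = BZ$.'' So I would need to carefully set up the left-sided mirror of Lemma \ref{lem:alberto1}: if $B^{-1}A$ is a minimal \emph{left} fractional decomposition and $W \in \mc K[\partial]^n$ (row vector) satisfies $W A B^{-1} \in \mc K[\partial]^n$, then $W = Z B$. The proof is the transpose of the given one (reduce $B$ to diagonal via the $TU$ form of Lemma \ref{lem:3.1}, build a non-degenerate completion, apply part (c) of Theorem \ref{th:6.4} on the left), so no new idea is required, but the bookkeeping of left-vs-right and adjoint-vs-transpose must be done consistently. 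Once that mirror lemma is in hand, the argument above closes: the skewadjointness relation \eqref{20120119:eq1} is precisely what converts the hypothesis $A^*H + B^*G = 0$ into a divisibility statement to which the minimal-decomposition uniqueness applies, yielding the common preimage $F$ and hence maximal isotropicity of $\mc L_{A,B}$.
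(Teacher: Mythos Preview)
Your proposal has a genuine gap at the two places you yourself flag as delicate, and neither can be repaired along the lines you suggest.

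First, the step ``$B^*G = B^*AF$ implies $G = AF$ since $B^*$ is injective on $\mc K^n$'' is simply false: a non-degenerate matrix differential operator of positive degree $d(B^*)=d(B)>0$ has nontrivial kernel over any sufficiently large differential field (indeed a $d(B)$-dimensional one over the linear closure). Non-degeneracy, coprimeness, and the equality $d(B^*)=d(B)$ say nothing about injectivity of the action on $\mc K^n$. Second, your invocation of a left-handed Lemma~\ref{lem:alberto1} to pass from ``$A^*H\in\im B^*$'' to ``$H\in\im B$'' is a category error: Lemma~\ref{lem:alberto1} is a divisibility statement in the module $\mc K[\partial]^n$ (columns of differential operators), not a statement about the action on $\mc K^n$. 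Even if you could conclude $H=BZ$ with $Z\in\mc K[\partial]^n$, you would still need $Z\in\mc K^n$, and there is no degree argument forcing this. On top of all this, over a general $\mc K$ the operator $B$ need not be surjective on $\mc K^n$, so you cannot even produce a first approximation $F_1$ with $H=BF_1$.

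The paper's proof handles exactly these issues by first working over the linear closure $\bar{\mc K}$, where $B$ \emph{is} surjective on $\bar{\mc K}^n$ and where $\ker B$ and $\ker B^*$ are both finite-dimensional of the same dimension $d(B)$. The skewadjoint relation \eqref{20120119:eq1} shows $A$ maps $\ker B$ into $\ker B^*$, and condition $(iii)$ ($\ker\bar A\cap\ker\bar B=0$) makes this map injective, hence bijective by dimension. One then lifts $H$ via surjectivity of $B$ to get $F_1$ with $H=BF_1$, observes $G-AF_1\in\ker B^*$, and corrects by the unique $F_2\in\ker B$ with $AF_2=G-AF_1$. The descent from $\bar{\mc K}$ to $\mc K$ uses uniqueness of $F$ (again from condition $(iii)$) and a differential Galois argument. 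The crucial idea you are missing is this finite-dimensional bijection $A:\ker B\stackrel{\sim}{\to}\ker B^*$ over the linear closure; it is precisely what replaces the injectivity of $B^*$ you wrongly assume.
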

\begin{proof}
First, we prove the statement in the case when the differential field $\mc K$ is linearly closed.
Due to equation \eqref{20120119:eq1}, $A$ maps $\ker B$ to $\ker B^*$.
Since, by assumption, $\ker A\cap\ker B=0$,
this map is injective.
Moreover, since $\ker B$ and $\ker B^*$ have the same dimension
(equal to $d(B)$, by Lemma \ref{lem:3.1}(b)),
we conclude that we have a bijective map:
\begin{equation}\label{20120119:eq2}
A\,:\,\,\ker B\,\stackrel{\sim}{\longrightarrow}\ker B^*\,.
\end{equation}
Let $G,H\in\mc K^n$ be such that $A^*H+B^*G=0$.
Since $\det B\neq0$, we have that $B:\,\mc K^n\to\mc K^n$ is surjective
(by Lemma \ref{lem:3.1}(b)).
Hence we can choose $F_1\in\mc K^n$ such that $G=BF_1$.
Due to equation \eqref{20120119:eq1},
we get
$$
B^*AF_1=-A^*BF_1=-A^*G=B^*H\,.
$$
Hence, $H-AF_1\in\ker B^*$,
and by \eqref{20120119:eq2} there exists $F_2\in\ker B$ such that
$AF_2=H-AF_1$.
So, $H=A(F_1+F_2)$ and $G=BF_1=B(F_1+F_2)$,
proving condition $(vi)$.

Next, we prove the claim for a differential field $\mc K$ with algebraically closed subfield of constants.
Since, by assumption, $\ker\bar A\cap\ker\bar B=0$,
we know by the previous result that there is a solution $F\in {\mc L}^n$  
to the equations $G=AF$ and $H=BF$, where $\mc L$ is the linear closure 
of $\mc K$, 
and this solution is obviously unique
(since two solutions differ by an element in $\ker\bar A\cap\ker\bar B$).
We will next use a standard differential Galois theory argument to conclude 
that this solution $F$ must lie in $\mc K^n$.

By definition of the linear closure, all the entries of $F$ lie in some iterate Picard-Vessiot compositum of $\mc K$. Take $i$ minimal such that ${\mc K}_i$ satisfies this property. Assume $i \neq 0$. By Lemma {lem:5.9}, all entries of 
$F$ lie in some Picard-Vessiot subextension ${\mc K}_{i-1} \subset \mc P \subset {\mc K}_i$. As the solution $F$ is unique in the linear closure, it is fixed by all the differential automorphisms of the extension ${\mc K}_{i-1} \subset \mc P$, hence it lies in ${\mc K}_{i-1}^n$, which contradicts the minimality 
of $i$.
In the general case, we know from the previous discussion that there is a 
unique solution $F$ in $({{\mc K} \otimes_{\mc C} {\bar{\mc C}}})^n$. Hence 
all the entries of $F$ lie in a Galois extension $\mc G$ of $\mc K$. We know 
that there is a unique way to extend a derivation to an algebraic extension, 
so all algebraic automorphisms of this Galois extension are also differential 
automorphisms. Hence $F$ is fixed under the action of $Gal({\mc G}/{\mc K})$ 
which means that it lies in ${\mc K}^n$.

\end{proof}
\begin{proposition}
If a fraction $AB^{-1}$ of matrix differential operators satisfies 
$A^*B+B^*A=0$, and $AB^{-1}=({A_0D})({B_0D})^{-1}$ with
$Ker \bar{A_0} \cap Ker \bar{B_0}=0$, then ${\mc L}_{A,B}$ is maximal 
isotropic if and only if $D$ is surjective and 
$Ker D^* \cap (Im {A_0}^* + Im {B_0}^*)=0$.
\end{proposition}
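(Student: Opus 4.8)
The plan is to reduce the general statement to the already-proven Theorem~\ref{20120119:tim} by writing everything in terms of the minimal decomposition $A_0B_0^{-1}$, and then to analyze exactly which ``extra'' conditions on $D$ are needed. First I would observe that $\mc L_{A,B}$ and $\mc L_{A_0,B_0}$ are related by the image of $D$: since $A=A_0D$ and $B=B_0D$, we have $\mc L_{A,B}=\{B_0(DP)\oplus A_0(DP)\mid P\in\mc K^n\}=\{B_0 Q\oplus A_0 Q\mid Q\in\operatorname{im}D\subset\mc K^n\}$. Thus $\mc L_{A,B}\subset\mc L_{A_0,B_0}$, with equality if and only if $D:\mc K^n\to\mc K^n$ is surjective; and by Theorem~\ref{20120119:tim} (applicable since $AB^{-1}=A_0B_0^{-1}$ is skewadjoint, so $A_0^*B_0+B_0^*A_0=0$, and $\ker\bar A_0\cap\ker\bar B_0=0$), $\mc L_{A_0,B_0}$ is maximal isotropic. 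Hence, once $\mc L_{A,B}$ is isotropic, it is maximal isotropic if and only if it is not properly contained in a larger isotropic subspace; since $\mc L_{A_0,B_0}$ is already maximal isotropic, the only candidate that could strictly contain $\mc L_{A,B}$ and still be isotropic is $\mc L_{A_0,B_0}$ itself, so maximality of $\mc L_{A,B}$ is equivalent to $\mc L_{A,B}=\mc L_{A_0,B_0}$, i.e.\ to $D$ being surjective.

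Next I would make the orthogonal-complement description explicit. Recall from the discussion preceding Theorem~\ref{20120119:tim} that for a pair $(A,B)$ with $A^*B+B^*A=0$, the subspace $\mc L_{A,B}$ is isotropic, and its orthogonal complement in $\mc K^n\oplus\mc K^n$ (with respect to the given bilinear form with values in $\mc K/\partial\mc K$) is $\{G\oplus H\mid A^*H+B^*G=0\}$, by the same integration-by-parts computation used to derive \eqref{20120119:eq1} and condition $(vi)$. So $\mc L_{A,B}$ is maximal isotropic precisely when $\mc L_{A,B}=\mc L_{A,B}^\perp$, i.e.\ when condition $(vi)$ holds for the pair $(A,B)$. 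Writing $A^*=D^*A_0^*$ and $B^*=D^*B_0^*$, the equation $A^*H+B^*G=0$ becomes $D^*(A_0^*H+B_0^*G)=0$, i.e.\ $A_0^*H+B_0^*G\in\ker D^*$. Since $\mc L_{A_0,B_0}$ is maximal isotropic, condition $(vi)$ holds for $(A_0,B_0)$: whenever $A_0^*H+B_0^*G=0$ we can solve $G=A_0F$, $H=B_0F$. The obstruction to solving the analogous system for $(A,B)$ therefore has two sources: first, even when $A_0^*H+B_0^*G=0$ so that $G=A_0F$, $H=B_0F$, we still need $F\in\operatorname{im}D$ to write $G=AF'$, $H=BF'$ — this forces $D$ surjective; second, we must rule out the ``new'' solutions of $A^*H+B^*G=0$ with $A_0^*H+B_0^*G$ a nonzero element of $\ker D^*$, and such a pair exists precisely when $\ker D^*\cap(\operatorname{im}A_0^*+\operatorname{im}B_0^*)\neq0$.

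So the argument has two directions. For the ``only if'' direction: if $\mc L_{A,B}$ is maximal isotropic, then by the first paragraph $D$ is surjective; and if there were $0\neq z\in\ker D^*\cap(\operatorname{im}A_0^*+\operatorname{im}B_0^*)$, say $z=A_0^*H_0+B_0^*G_0$, then $G_0\oplus H_0$ lies in $\mc L_{A,B}^\perp$ (since $A^*H_0+B^*G_0=D^*z=0$), but $G_0\oplus H_0\notin\mc L_{A_0,B_0}\supset\mc L_{A,B}$ — because if $G_0=A_0F$, $H_0=B_0F$ then $z=(A_0^*B_0+B_0^*A_0)F=0$ — contradicting $\mc L_{A,B}=\mc L_{A,B}^\perp$. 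For the ``if'' direction: assume $D$ surjective and $\ker D^*\cap(\operatorname{im}A_0^*+\operatorname{im}B_0^*)=0$; given $G\oplus H$ with $A^*H+B^*G=0$, we get $A_0^*H+B_0^*G\in\ker D^*\cap(\operatorname{im}A_0^*+\operatorname{im}B_0^*)=0$, so by maximality of $\mc L_{A_0,B_0}$ there is $F$ with $G=A_0F$, $H=B_0F$; surjectivity of $D$ gives $F=DF'$, whence $G=AF'$, $H=BF'$, establishing $(vi)$ for $(A,B)$.

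The main obstacle I anticipate is the careful bookkeeping of the orthogonal-complement identity over $\mc K/\partial\mc K$ rather than over a field: one must be sure that the pairing $(P_1\oplus Q_1\mid P_2\oplus Q_2)$ has $\mc L_{A,B}^\perp$ exactly equal to $\{G\oplus H\mid A^*H+B^*G=0\}$ and not merely contained in it, which uses that $\int fP=0$ for all $P\in\mc K^n$ forces $f=0$ (the standard nondegeneracy of the trace pairing on $\mc K$ modulo $\partial\mc K$); granting that, every other step is linear algebra over $\mc K$ together with the already-established maximal isotropicity of $\mc L_{A_0,B_0}$ from Theorem~\ref{20120119:tim}. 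A secondary point to watch is that all the relevant kernels and images should be taken over $\mc K$ itself (the statement is about $\mc K^n\oplus\mc K^n$); the passage to $\bar{\mc K}$ is already absorbed into the hypothesis $\ker\bar A_0\cap\ker\bar B_0=0$ and into Theorem~\ref{20120119:tim}.
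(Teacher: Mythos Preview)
Your proof is correct, and the core argument in your paragraphs~2--3 matches the paper's proof essentially line for line: both directions are handled via condition~$(vi)$ together with the maximal isotropicity of $\mc L_{A_0,B_0}$ from Theorem~\ref{20120119:tim}.

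Your paragraph~1 adds an observation the paper does not make explicit: since $\mc L_{A,B}\subset\mc L_{A_0,B_0}$ with equality iff $D$ is surjective (using $\ker A_0\cap\ker B_0=0$ over $\mc K$), and $\mc L_{A_0,B_0}$ is maximal isotropic, one gets ``$\mc L_{A,B}$ maximal $\Leftrightarrow D$ surjective'' immediately. This is a cleaner route to surjectivity of $D$ than the paper's, which instead produces for each $f\in\mc K^n$ the pair $B_0f\oplus A_0f$, checks $A^*(B_0f)+B^*(A_0f)=D^*(A_0^*B_0+B_0^*A_0)f=0$, and then invokes~$(vi)$. It also shows, incidentally, that the kernel condition on $D^*$ is redundant for the ``if'' direction of the Proposition. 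One phrasing to correct: your sentence ``the only candidate that could strictly contain $\mc L_{A,B}$ and still be isotropic is $\mc L_{A_0,B_0}$ itself'' is not true in general (there may be many maximal isotropic subspaces); the argument only needs the simpler fact that if $\mc L_{A,B}\subsetneq\mc L_{A_0,B_0}$ then $\mc L_{A_0,B_0}$ itself witnesses non-maximality, while equality inherits maximality from $\mc L_{A_0,B_0}$.
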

\begin{proof}
Assume that $D$ is surjective and $Ker D^* \cap (Im {A_0}^* + Im {B_0}^*)=0$.
We have ${A_0}^*{B_0}+{B_0}^*{A_0}=0$, hence 
${\mc L}_{A_0,B_0}$ is maximal isotropic, since $A_0B_0^{-1}$ is a minimal 
fractional decomposition. 
Let $f,g \in {\mc K}^n$ be such that $A^*f+B^*g=0$. Since 
$Ker D^* \cap (Im {A_0}^* + Im {B_0}^*)=0$, we get that 
${A_0}^*f+{B_0}^*g=0$. By maximal isotropicity of ${\mc L}_{A_0,B_0}$, 
we can find some $h \in {\mc K}^n$ such that $f={B_0}h$ and $g={A_0}h$. Since 
$D$ is surjective, there is $k \in {\mc K}^n$ such that $h=Dk$. 
So $f=Ak$ and $g=Bk$, hence ${\mc L}_{A,B}$ is maximal isotropic.

Conversely, assume that ${\mc L}_{A,B}$ is maximal isotropic. First, we prove 
that $D$ is surjective. Take $f\in {\mc K}^n$. Multiplying on the left by 
$D^*$ the equation ${A_0}^*{B_0}+{B_0}^*{A_0}=0$ and evaluating it at $f$, 
we get that $A^*{B_0}f+B^*{A_0}f=0$, hence by maximal isotropicity of 
${\mc L}_{A,B}$, ${B_0}f=Bg$ and ${A_0}f=Ag$ for some $g \in {\mc K}^n$. 
Therefore $f-Dg \in Ker {A_0} \cap Ker {B_0}=0$, hence $f=Dg$. 
So $D$ is surjective. Next, take 
$x \in Ker D^* \cap (Im {A_0}^* + Im {B_0}^*)$. In particular, 
$x={A_0}^*g+{B_0}^*h$ for some $g, h \in {\mc K}^n$ and $A^*g+B^*h=0$. 
By maximal isotropicity of 
${\mc L}_{A,B}$, we 
see that $g=Bk$ and $h=Ak$ for some $k \in {\mc K}^n$. 
Multiplying the equation $A_0^*B_0+B_0^*A_0 =0$ by $Dk$ on the right, we get 
that $x=0$. 
\end{proof}
%%
%\begin{remark}
%$D$ surjective and $D^*$ injective does not imply that $D$ is trivial, even in the scalar case. %Indeed, consider the space of Laurent polynomials with complex coefficients and define the %derivation $\partial$ by $\partial(z)=(2/3)z$. Then $D=\partial+1$ is surjective and $D^*$ is %injective.
%\end{remark}
\begin{remark}
In the linearly closed case, a skewadjoint fraction $AB^{-1}$ is a minimal fractional decomposition if and only if $\mc L_{A,B}$  is maximal isotropic. 
Indeed, since $Ker D^* \cap (Im A_0^* + Im B_0^*)=0$ and 
$det(B_0^*)=\pm det(B_0) \neq 0$, we see that 
%$det(B_0^*) \neq 0$ and 
$B_0^*$ is surjective, hence $Ker D^*=0$. Therefore 
$d(D^*)=0=d(D)$ and $D$ is invertible.
Here we used Corollary \ref{cor:3.2} and Remark \ref{rem:3.3}.
\end{remark}%

%%%%%%%%%%%%%%%%%%%%%%%%%%%%%%%%%%%%%%%%%%%%%%%%%%%%%%%%%%%%%%%%%%%%%%%%%%%%%%%%%%%%%%%%%%%%%%%%%%%%%%%%%%%%%%
%%%%%%%%%%%%%%% Bibliography %%%%%%%%%%%%%%%%%%%%%%%%%%%%%%%%%%%%%%%%%%%%%%%%%%%%%%%%%%%%%%%%%%%%%%%%%%%%%%%%%
%%%%%%%%%%%%%%%%%%%%%%%%%%%%%%%%%%%%%%%%%%%%%%%%%%%%%%%%%%%%%%%%%%%%%%%%%%%%%%%%%%%%%%%%%%%%%%%%%%%%%%%%%%%%%%

% Non-BibTeX users please use

\end{document}